\newtheorem{algorithm}{Weak Galerkin Algorithm}
\newcommand{\bu}{{\bf u}}
\newcommand{\bw}{{\bf w}}
\newcommand{\be}{{\bf e}}
\newcommand{\bv}{{\bf v}}
\def\T{{\mathcal T}}
\def\E{{\mathcal E}}
\def\W{{\mathcal W}}
\def\pT{{\partial T}}
\def\l{{\langle}}
\def\r{{\rangle}}
\def\dw{{\mathcal \nabla_w\cdot}}
\def\T{{\mathcal T}}
\def\E{{\mathcal E}}
\def\A{{\kappa^{-1}}}
\def\jump#1{{[\![#1[\!]}}
\def\bbf{{\bf f}}
\def\bn{{\bf n}}
\def\bbQ{\mathbb{Q}}
\def\3bar{{|\hspace{-.02in}|\hspace{-.02in}|}}
\title{A stable numerical algorithm for the Brinkman equations
by weak Galerkin finite element methods}
\author{Lin Mu\thanks{Department of Mathematics, Michigan State University,
East Lansing, MI 48824 (linmu@msu.edu)}\and Junping
Wang\thanks{Division of Mathematical Sciences, National Science
Foundation, Arlington, VA 22230 (jwang@\break nsf.gov). The research
of Wang was supported by the NSF IR/D program, while working at the
Foundation. However, any opinion, finding, and conclusions or
recommendations expressed in this material are those of the author
and do not necessarily reflect the views of the National Science
Foundation.} \and Xiu Ye\thanks{Department of Mathematics,
University of Arkansas at Little Rock, Little Rock, AR 72204
(xxye@ualr.edu). This research was supported in part by National
Science Foundation Grant DMS-1115097}}
\begin{document}
\maketitle

\begin{abstract}
This paper presents a stable numerical algorithm for the Brinkman
equations by using weak Galerkin (WG) finite element methods. The
Brinkman equations can be viewed mathematically as a combination of
the Stokes and Darcy equations which model fluid flow in a
multi-physics environment, such as flow in complex porous media with
a permeability coefficient highly varying in the simulation domain.
In such applications, the flow is dominated by Darcy in some regions
and by Stokes in others. It is well known that the usual Stokes
stable elements do not work well for Darcy flow and vise versa. The
challenge of this study is on the design of numerical schemes which
are stable for both the Stokes and the Darcy equations. This paper
shows that the WG finite element method is capable of meeting this
challenge by providing a numerical scheme that is stable and
accurate for both Darcy and the Stokes dominated flows. Error
estimates of optimal order are established for the corresponding WG
finite element solutions. The paper also presents some numerical
experiments that demonstrate the robustness, reliability,
flexibility and accuracy of the WG method for the Brinkman
equations.
\end{abstract}

\begin{keywords}
Weak Galerkin, finite element methods, the Brinkman equations,
polyhedral meshes.
\end{keywords}

\begin{AMS}
Primary, 65N15, 65N30, 76D07; Secondary, 35B45, 35J50
\end{AMS}
\pagestyle{myheadings}

\section{Introduction} This paper is concerned with the development of
stable numerical methods for the Brinkman equations by using weak
Galerkin finite element methods. The Brinkman equations model fluid
flow in complex porous media with a permeability coefficient highly
varying so that the flow is dominated by Darcy in some regions and
by Stokes in others. In a simple form, the Brinkman model seeks
unknown functions $u$ and $p$ satisfying
\begin{eqnarray}
-\mu\Delta{\bf u}+\nabla p+\mu\kappa^{-1}{\bf u}&=& \bbf\quad
\mbox{in}\;\Omega,\label{moment}\\
\nabla\cdot\bu &=&0\quad \mbox{in}\;\Omega,\label{cont}\\
{\bf u}&=&{\bf g}, \mbox{ on }\; \partial\Omega,\label{bc}
\end{eqnarray}
where $\mu$ is the fluid viscosity and $\kappa$ denotes the
permeability tensor of the porous media which occupies a polygonal
or polyhedral domain $\Omega$ in $\mathbb{R}^d\; (d=2,3)$. $u$ and
$p$ represent the velocity and the pressure of the fluid, and $f$ is
a momentum source term. For simplicity, we consider (\ref{moment})
and (\ref{cont}) with ${\bf g}=0$ and $\mu=1$ (note that one can
always scale the solution with $\mu$).

Assume that there exist two positive numbers $\lambda_1,
\lambda_2>0$ such that
\begin{equation}\label{ellipticity}
\lambda_1 \xi^t\xi\le \xi^t \kappa^{-1} \xi\le \lambda_2
\xi^t\xi,\qquad\forall \xi\in\mathbb{R}^d.
\end{equation}
Here $\xi$ is understood as a column vector and $\xi^t$ is the
transpose of $\xi$. We consider the case where $\lambda_1$ is of
unit size and $\lambda_2$ is possibly of large size.

The Brinkman equations (\ref{moment}) and (\ref{cont}) are used to
model fluid motion in porous media with fractures. The model can
also be regarded as a generalization of the Stokes equations that
represent a valid approximation of the Navier-Stokes equations at
low Reynolds numbers. Modeling fluid flow in complex media with
multiphysics has significant impact for many industrial and
environmental problems such as industrial filters, open foams, or
natural vuggy reservoirs. The permeability with high contrast
determines that flow velocity may vary greatly through porous media.
Mathematically, the Brinkman equations can be viewed as a
combination of the Stokes and the Darcy equations, but with change
of type from place to place in the computational domain. Due to the
type change, numerical schemes for the Brinkman equations must be
carefully designed to accommodate both the Stokes and Darcy
simultaneously. The numerical experiments in \cite{mtw} indicate
that the convergent rate deteriorates as the Brinkman becomes
Darcy-dominating when certain stable Stokes elements are used; such
elements include the conforming $P_2$-$P_0$ element, the
nonconforming Crouzeix-Raviart element, and the Mini element.
Similarly, the convergent rate deteriorates as the Brinkman is
Stokes-dominating when Darcy stable elements such as the lowest
order Raviart–Thomas element \cite{mtw} are used.

The main challenge for solving Brinkman equations is in the
construction of numerical schemes that are stable for both the Darcy
and the Stokes equations. In literature, a great deal of effort has
been made in meeting this challenge by modifying either existing
Stokes elements or Darcy elements to obtain new Brinkman stable
elements. For example, methods based on Stokes elements have been
studied in \cite{bc} and methods based on Darcy elements can be
found in \cite{ks,mtw}.

Weak Galerkin (WG) is a general finite element technique for partial
differential equations in which differential operators are
approximated by their weak forms as distributions. WG methods, by
design, make use of discontinuous piecewise polynomials on finite
element partitions with arbitrary shape of polygons and polyhedrons.
The flexibility of WG on the selection of approximating polynomials
makes it an excellent candidate for providing stable numerical
schemes for PDEs with multi-physics properties. The weak Galerkin
method was first introduced in \cite{wy,wy-mixed} for the second
order elliptic problem.

The goal of this paper is to develop a stable weak Galerkin finite
element method for the Brinkman equations. In Section
\ref{Section:section2}, a WG finite element scheme will be
introduced for the Brinkman model. It demonstrates that WG offers a
natural and straightforward framework for constructing stable
numerical algorithms for the Brinkman equations. In Section
\ref{Section:errores}, an optimal order error estimate shall be
established for the velocity and pressure approximations. In Section
\ref{Section:numerics}, some numerical experiments are conducted to
demonstrate the reliability, flexibility and accuracy of the weak
Galerkin method for the Brinkman equations. In particular, the first
example, which has known analytical solution, is designed to
demonstrate uniform convergence of the WG method with respect to
certain parameters. The rest of the examples are relevant to
practical problems for which no analytical solutions are known. In
addition, flow through different geometries are investigated in the
numerical experiments. These geometries include vuggy structure,
open foam and fibrous materials. Figure \ref{media} depicts the
profile of the permeability inverse for three highly varying porous
media under the present study.

\begin{figure}[!htb]\label{media}
\centering
\begin{tabular}{ccc}
  \resizebox{1.2in}{1.2in}{\includegraphics{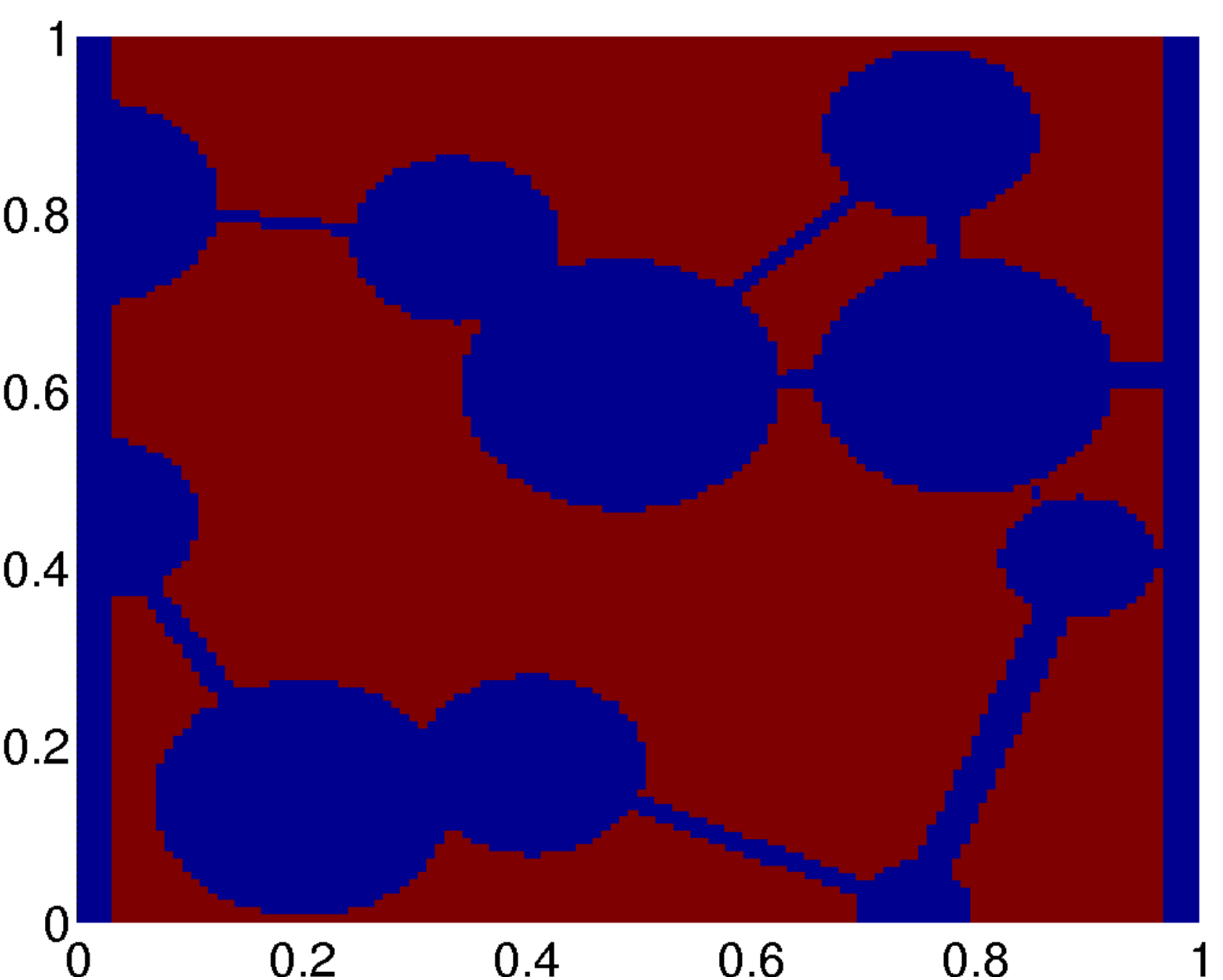}}&
  \resizebox{1.2in}{1.2in}{\includegraphics{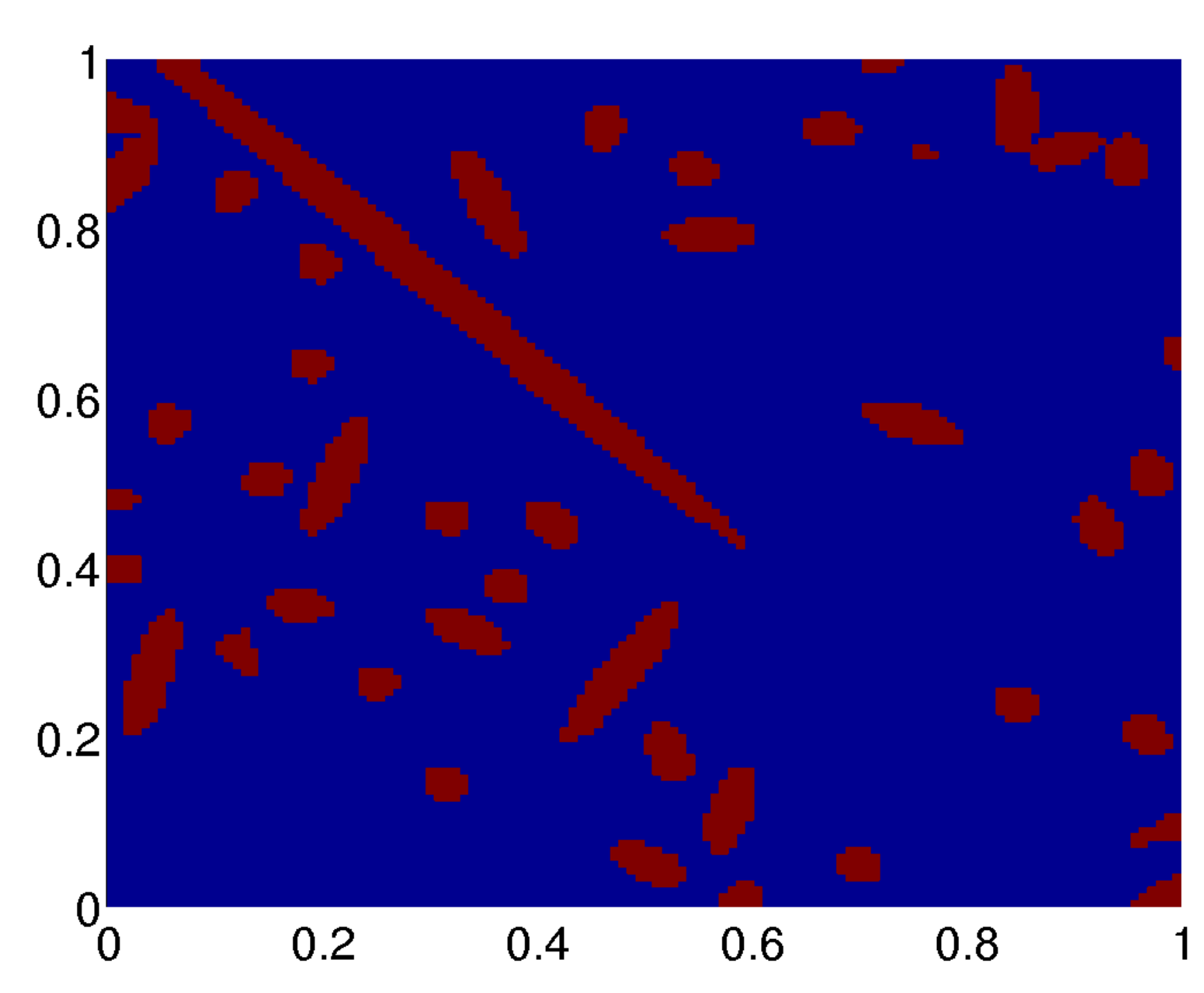}} &
  \resizebox{1.2in}{1.2in}{\includegraphics{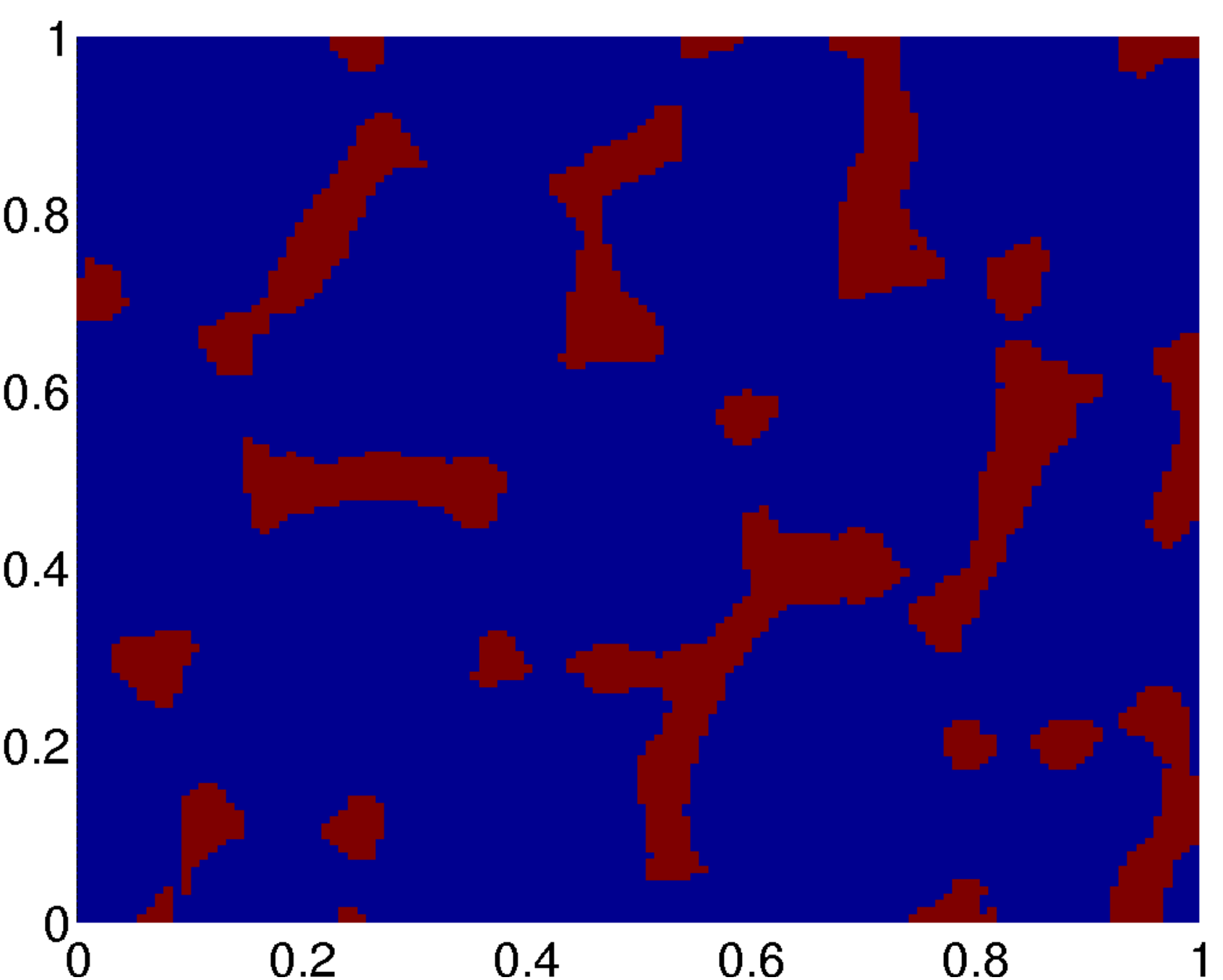}} \\
  (a) & (b)  & (c)
\end{tabular}
\caption{(a) vuggy medium; (b) fibrous material; (c) open foam.}
\end{figure}

\section{A Weak Galerkin Finite Element Method}\label{Section:section2}
First, we use the standard definition for the Sobolev space $H^s(D)$
and their associated inner products $(\cdot,\cdot)_{s,D}$, norms
$\|\cdot\|_{s,D}$, and seminorms $|\cdot|_{s,D}$ for any $s\ge 0$.
We shall drop the subscript $D$ when $D=\Omega$ and $s$ as $s=0$ in
the norm and inner product notation.

Let ${\cal T}_h$ be a partition of the domain $\Omega$ consisting of
polygons in two dimension or polyhedra in three dimension satisfying
a set of conditions as specified in \cite{wy-mixed}. Denote by
${\cal E}_h$ the set of all flat faces in ${\cal T}_h$, and let
${\cal E}_h^0={\cal E}_h\backslash\partial\Omega$ be the set of all
interior faces.

For $k\ge 1$, we define two weak Galerkin finite element spaces
associated with $\T_h$ as follows. For the velocity unknown, we have
\begin{equation}\label{FSVelocity}
V_h =\left\{ \bv=\{\bv_0, \bv_b\}:\ \{\bv_0, \bv_b\}|_{T}\in
[P_{k}(T)]^d\times [P_{k}(e)]^d,\ e\in\pT,\; \bv_b=0\ \mbox{on}\
\partial\Omega \right\},
\end{equation}
and for pressure
\begin{equation}\label{FSPressure}
W_h =\left\{q\in L_0^2(\Omega): \ q|_T\in P_{k-1}(T)\right\}.
\end{equation}
By a weak function $\bv=\{\bv_0, \bv_b\}$ we mean $\bv=\bv_0$ inside
of the element $T$ and $\bv=\bv_b$ on the boundary of the element
$T$. We would like to emphasize that any function $\bv\in V_h$ has a
single value $v_b$ on each edge $e\in\E_h$.

Our weak Galerkin finite element method is based on the following
variational formulation for (\ref{moment})-(\ref{bc}): find $(\bu,
p)\in [H_0^1(\Omega)]^d\times L_0^2(\Omega)$ satisfying
\begin{eqnarray}
(\nabla\bu,\nabla\bv)+(\A\bu,\bv)-(\nabla\cdot\bv,p)&=&({\bf f}, \bv),\label{w1}\\
(\nabla\cdot\bu,q)&=&0\label{w2}
\end{eqnarray}
for all $(\bv, q)\in [H_0^1(\Omega)]^d\times L_0^2(\Omega)$.

The key in the design of WG finite element scheme is the use of weak
derivatives in the place of strong derivatives in the variational
form for the underlying partial differential equations. Note that
the two differential operators used in (\ref{w1}) and (\ref{w2}) are
the gradient and divergence operators. Weak gradient and weak
divergence operators, along with their discrete analogues, have been
defined in \cite{wy} and \cite{wy-mixed} respectively. For
completeness, we recall the discrete weak divergence and weak
gradient operators as follows. For each $\bv=\{\bv_0, \bv_b\} \in
V_h$, the discrete weak divergence $\nabla_{w,k-1}\cdot\bv\in
P_{k-1}(T)$ is given on each element $T$ such that
\begin{equation}\label{d-d}
( \nabla_{w,k-1}\cdot\bv,q)_T = -(\bv_0,\nabla q)_T
+\l\bv_b,q\bn\r_\pT,\quad\quad\forall q\in P_{k-1}(T).
\end{equation}
Similarly, the discrete weak gradient $\nabla_{w,k-1}\bv\in
P_{k-1}(T)^{d\times d}$ is defined on each element $T$ by
\begin{equation}\label{d-g}
(\nabla_{w,k-1}\bv,\tau)_T = -(\bv_0,\nabla\cdot \tau)_T+ \l\bv_b,
\tau\cdot\bn \r_\pT,\quad\quad\forall \tau\in [P_{k-1}(T)]^{d\times
d}.
\end{equation}
Without confusion, we will drop the subscript $k-1$ and use
$\nabla_w\cdot$ and $\nabla_w$ to denote $\nabla_{w,k-1}\cdot$ and
$\nabla_{w,k-1}$. We will also use $(\nabla_w\cdot\bv,q)$ and
$(\nabla_w\bv,\nabla_w\bw)$ to denote
$\sum_{T\in\T_h}(\nabla_w\cdot\bv,q)_T$ and
$\sum_{T\in\T_h}(\nabla_w\bv,\nabla_w\bw)_T$ respectively.

We are now in a position to describe a weak Galerkin finite element
method for the Brinkman equations (\ref{moment})-(\ref{bc}). To this
end, we introduce three bilinear forms as follows
\begin{eqnarray*}
s(\bv,\bw) & = &\sum_{T\in {\cal T}_h}h_T^{-1}\l
\bv_0-\bv_b,\bw_0-\bw_b\r_\pT,\\
a(\bv,\bw)&=&(\nabla_w\bv,\nabla_w\bw)+(\A\bv_0,\bw_0)+s(\bv,\bw),\\
b(\bv,q)&=&(\nabla_w\cdot\bv,q).\\
\end{eqnarray*}

\begin{algorithm}
Find $\bu_h=\{\bu_0,\bu_b\}\in V_h$ and $p_h\in W_h$ such that
\begin{eqnarray}
a(\bu_h,\bv)-b(\bv,p_h)&=&(f,\bv_0),\qquad \forall\; \bv=\{\bv_0,\bv_b\}\in V_h\label{wg1}\\
b(\bu_h,q)&=&0, \;\qquad\forall\; q\in W_h .\label{wg2}
\end{eqnarray}
The corresponding solution $(\bu_h; p_h)$ is called WG finite
element solution for (\ref{moment})-(\ref{bc}).
\end{algorithm}
\smallskip

\section{Existence and Uniqueness}\label{Section:stability}

The WG finite element scheme (\ref{wg1})-(\ref{wg2}) is a
saddle-point problem. However, the theory of Babu\u{s}ka
\cite{babuska} and Brezzi \cite{brezzi} is hard to apply directly
due to the large variation of the permeability tensor. But the main
ideas of Babu\u{s}ka and Brezzi are still applicable.

For the velocity space $V_h$, we use a norm $\3bar\cdot\3bar$
induced by the symmetric an positive bilinear form $a(\cdot,\cdot)$
defined as follows
\begin{equation}\label{3barnorm}
\3bar
\bv\3bar^2=a(\bv,\bv)=\|\kappa^{-\frac12}\bv_0\|^2+\|\nabla_w\bv\|^2+\sum_{T\in\T_h}h_T^{-1}\|\bv_0-\bv_b\|_{\partial
T}^2.
\end{equation}
For convenience, we introduce another norm $\|\cdot\|_{1,h}$ in
$V_h$
\begin{equation}\label{Vh-anothernorm}
\|\bv\|_{1,h}^2
=\|\nabla_w\bv\|^2+\sum_{T\in\T_h}h_T^{-1}\|\bv_0-\bv_b\|_{\partial
T}^2.
\end{equation}
It is not hard to see that $\|\cdot\|_{1,h}$ is a discrete $H^1$
norm for $V_h$.

For the pressure space $W_h$, we use the following norm
\begin{equation}\label{H1norm}
|q|_{1,h}^2 = \sum_{T\in \T_h} \|\kappa^{\frac12}\nabla q\|_T^2
+h^{-1} \sum_{e\in\E_h^0} \|\jump{q}\|_e^2,
\end{equation}
where $\jump{q}$ is the jump of the function $q$ on the set of
interior edges $\E_h^0$.

For simplicity of analysis, the rest of the paper assumes that the
permeability tensor $\kappa$ has constant value on each element
$T\in\T_h$. The result can be easily extended to the case of
piecewise smooth tensor $\kappa$.

\smallskip
The following result is straightforward by using the definition of
$\3bar\cdot\3bar$ and the usual Cauchy-Schwarz inequality.

\begin{lemma}\label{coercivity+boundedness}
For any $\bv,\bw\in V_h$, we have the following boundedness and
coercivity for the bilinear form $a(\cdot,\cdot)$
\begin{eqnarray}
|a(\bv,\bw)|&\le& \3bar\bv\3bar\3bar\bw\3bar,\label{bd}\\
a(\bv,\bv)&=&\3bar\bv\3bar^2.\label{elliptic}
\end{eqnarray}
\end{lemma}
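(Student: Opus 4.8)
The plan is to treat $a(\cdot,\cdot)$ as a genuine (semi-)inner product on $V_h$ and deduce both assertions from general Hilbert-space facts rather than from lengthy estimates. The coercivity identity (\ref{elliptic}) is in fact immediate: setting $\bw=\bv$ in the definition of $a$ gives $a(\bv,\bv)=(\nabla_w\bv,\nabla_w\bv)+(\A\bv_0,\bv_0)+s(\bv,\bv)$, and the only point to verify is that the middle term equals $\|\kappa^{-\frac12}\bv_0\|^2$. This holds because the ellipticity condition (\ref{ellipticity}) forces $\kappa^{-1}$ to be symmetric and positive definite on each element, so its symmetric square root $\kappa^{-\frac12}$ is well defined and $(\A\bv_0,\bv_0)=(\kappa^{-\frac12}\bv_0,\kappa^{-\frac12}\bv_0)=\|\kappa^{-\frac12}\bv_0\|^2$. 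Comparing the three resulting terms with the definition (\ref{3barnorm}) of $\3bar\cdot\3bar^2$ then yields (\ref{elliptic}) verbatim.

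For the boundedness bound (\ref{bd}) I would first record that $a(\cdot,\cdot)$ is symmetric and positive semidefinite (indeed positive definite). Symmetry holds termwise: $(\nabla_w\bv,\nabla_w\bw)$ and $s(\bv,\bw)$ are manifestly symmetric, and $(\A\bv_0,\bw_0)=(\bv_0,\A\bw_0)$ because $\kappa^{-1}$ is a symmetric tensor. Positivity is exactly (\ref{elliptic}), since $\3bar\bv\3bar^2\ge 0$. Consequently $a$ is a semi-inner product on $V_h$, and the Cauchy--Schwarz inequality for such forms gives
\[
|a(\bv,\bw)|\le a(\bv,\bv)^{1/2}\,a(\bw,\bw)^{1/2}=\3bar\bv\3bar\,\3bar\bw\3bar,
\]
which is precisely (\ref{bd}). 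Equivalently, one may argue termwise: apply the standard Cauchy--Schwarz inequality to each of the three pieces of $a(\bv,\bw)$ separately---the $L^2$ inner product for the weak-gradient term, the $\kappa^{-1}$-weighted inner product for the zeroth-order term, and the discrete boundary pairing $s(\cdot,\cdot)$---and then combine the three products by the elementary Cauchy--Schwarz inequality in $\mathbb{R}^3$ applied to the vectors of partial norms, which reproduces the factor $\3bar\bv\3bar\,\3bar\bw\3bar$.

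I expect no substantial obstacle: the statement is essentially a reformulation of the fact that $a$ induces the norm $\3bar\cdot\3bar$. The only point that genuinely uses the hypotheses rather than pure algebra is the symmetry and positive definiteness of $\kappa^{-1}$ guaranteed by (\ref{ellipticity}); this is what licenses both the square-root identity behind (\ref{elliptic}) and the semi-inner-product Cauchy--Schwarz behind (\ref{bd}).
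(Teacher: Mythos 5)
Your proof is correct and takes essentially the same route as the paper, which disposes of the lemma in one line by ``the definition of the triple-bar norm and the usual Cauchy--Schwarz inequality''---i.e., exactly your observation that $a(\cdot,\cdot)$ is a symmetric positive semidefinite form inducing the norm, so (\ref{elliptic}) is definitional and (\ref{bd}) is the semi-inner-product Cauchy--Schwarz inequality. One pedantic caution: the ellipticity condition (\ref{ellipticity}) alone only constrains the quadratic form and does not literally \emph{force} $\kappa^{-1}$ to be symmetric; symmetry of the permeability tensor is a standing (physical) assumption that the paper presupposes when it writes $\kappa^{-\frac12}$ in the norm definition, and with that assumption your argument is precisely the intended one.
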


\medskip

For any $\rho\in W_h\subset L_0^2(\Omega)$ and $\bv\in V_h$, we have
from the definition of the discrete weak divergence that
\begin{eqnarray*}
b(\bv,\rho)&=&\sum_{T\in \T_h} (\nabla_w\cdot\bv, \rho)_T\\
&=&\sum_{T\in \T_h} \{\langle \bv_b,
\rho\bn\rangle_{\partial T}-(\bv_0, \nabla\rho)_T\}\\
&=&-\sum_{T\in \T_h} (\bv_0, \nabla\rho)_T+ \sum_{e\in\E_h^0}
\langle \bv_b, \jump{\rho}\bn_e\rangle_{e},
\end{eqnarray*}
where $\bn_e$ is a prescribed normal direction to the edge $e$, and
$\jump{\rho}$ stands for the jump of the function $\rho$ on edge
$e$. In particular, if $\bv=\bv^*=\{\bv_0^*, \bv_b^*\}$ is given by
\begin{equation}\label{CO-8}
\bv_0^*= -\kappa \nabla\rho,\quad \bv_b^* = h^{-1}\jump\rho\bn_e,
\end{equation}
then
\begin{equation}\label{CO.01}
b(\bv^*,\rho) =\sum_{T\in \T_h} (\kappa\nabla\rho,
\nabla\rho)_T+h^{-1} \sum_{e\in\E_h^0}
\|\jump{\rho}\|_{e}^2=|\rho|^2_{1,h}.
\end{equation}
Thus, $\bv^*$ can be regarded as an artificial flux for the
``pressure" function $\rho$. For convenience, we introduce a
notation for this artificial flux:
\begin{equation}\label{artificial-flux}
F(\rho):=\{-\kappa\nabla\rho,\ h^{-1}\jump\rho\bn_e\}.
\end{equation}

\smallskip
\begin{lemma}\label{Lemma:inf-sup}
For any $\rho\in\W_h$, let $F(\rho)$ be the artificial flux given by
(\ref{artificial-flux}). Then, we have
\begin{equation}\label{CO.0101}
b(F(\rho),\rho)=|\rho|^2_{1,h}.
\end{equation}
Furthermore, there exists a constant $C$ such that
\begin{equation}\label{CO.0102}
\|F(\rho)\|_{1,h} \leq C h^{-1} |\rho|_{1,h}.
\end{equation}
\end{lemma}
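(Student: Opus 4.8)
The first identity (\ref{CO.0101}) requires no new work. By the definitions (\ref{CO-8}) and (\ref{artificial-flux}) the artificial flux $F(\rho)$ is exactly the function $\bv^*$ appearing in the computation immediately preceding the lemma, so (\ref{CO.0101}) is precisely the identity (\ref{CO.01}), upon noting that $(\kappa\nabla\rho,\nabla\rho)_T=\|\kappa^{\frac12}\nabla\rho\|_T^2$ since $\kappa$ is symmetric positive definite. The real content is the bound (\ref{CO.0102}), and my plan is to estimate separately the two pieces of $\|F(\rho)\|_{1,h}^2$ coming from the norm (\ref{Vh-anothernorm}): the stabilization piece $\sum_{T\in\T_h}h_T^{-1}\|F(\rho)_0-F(\rho)_b\|_{\partial T}^2$ and the weak-gradient piece $\|\nabla_w F(\rho)\|^2$. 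Throughout I would use (\ref{ellipticity}), which places the eigenvalues of $\kappa$ between $\lambda_2^{-1}$ and $\lambda_1^{-1}$, to convert the raw quantity $\nabla\rho$ into the weighted quantity $\|\kappa^{\frac12}\nabla\rho\|_T$ featured in $|\rho|_{1,h}$, and I would work under the usual shape regularity of $\T_h$ so that $h_T$ and the global $h$ are interchangeable up to a constant. I would also record at the outset that $F(\rho)_b=h^{-1}\jump\rho\bn_e$ vanishes on $\partial\Omega$ (the jump being taken only on interior edges), so that indeed $F(\rho)\in V_h$.

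For the stabilization piece I would split, via the triangle inequality and $F(\rho)_0=-\kappa\nabla\rho$, $F(\rho)_b=h^{-1}\jump\rho\bn_e$, into a volume part $\sum_T h_T^{-1}\|\kappa\nabla\rho\|_{\partial T}^2$ and an edge part $\sum_T h_T^{-1}h^{-2}\|\jump\rho\|_{\partial T}^2$. Since $\kappa\nabla\rho$ is a polynomial on $T$, the inverse trace inequality $\|\kappa\nabla\rho\|_{\partial T}^2\le Ch_T^{-1}\|\kappa\nabla\rho\|_T^2$ together with $\|\kappa\nabla\rho\|_T\le\lambda_1^{-\frac12}\|\kappa^{\frac12}\nabla\rho\|_T$ controls the volume part by $Ch^{-2}\sum_T\|\kappa^{\frac12}\nabla\rho\|_T^2\le Ch^{-2}|\rho|_{1,h}^2$. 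For the edge part, counting that each interior edge is shared by exactly two elements turns it into $Ch^{-3}\sum_{e\in\E_h^0}\|\jump\rho\|_e^2=Ch^{-2}|\rho|_{1,h}^2$. Hence the whole stabilization piece is $\le Ch^{-2}|\rho|_{1,h}^2$.

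For the weak-gradient piece I would first record the generic bound $\|\nabla_w\bv\|_T^2\le C(\|\nabla\bv_0\|_T^2+h_T^{-1}\|\bv_0-\bv_b\|_{\partial T}^2)$, valid for any $\bv\in V_h$. This follows by integrating by parts in the definition (\ref{d-g}) to rewrite $(\nabla_w\bv,\tau)_T=(\nabla\bv_0,\tau)_T+\l\bv_b-\bv_0,\tau\cdot\bn\r_{\partial T}$, then setting $\tau=\nabla_w\bv$ and combining Cauchy--Schwarz with the polynomial inverse trace inequality $\|\nabla_w\bv\|_{\partial T}\le Ch_T^{-1/2}\|\nabla_w\bv\|_T$. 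Applied to $\bv=F(\rho)$, the boundary contribution is exactly the stabilization piece already estimated, while $\|\nabla\bv_0\|_T=\|\kappa\nabla^2\rho\|_T$. Here I would exploit that $\kappa$ is constant on $T$, so that $\kappa^{\frac12}\nabla^2\rho=\nabla(\kappa^{\frac12}\nabla\rho)$; the inverse inequality applied to the polynomial $\kappa^{\frac12}\nabla\rho$ then yields $\|\kappa\nabla^2\rho\|_T\le\lambda_1^{-\frac12}\|\kappa^{\frac12}\nabla^2\rho\|_T\le Ch_T^{-1}\|\kappa^{\frac12}\nabla\rho\|_T$, whence $\|\nabla_w F(\rho)\|^2\le Ch^{-2}|\rho|_{1,h}^2$. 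Adding the two pieces and taking square roots gives (\ref{CO.0102}) with a constant $C$ depending only on $\lambda_1$ and the shape regularity.

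The main obstacle is the weak-gradient piece: unlike the stabilization piece it cannot be read off directly and demands the integration-by-parts reformulation of $\nabla_w$ followed by the inverse inequality on $\nabla\rho$. That inverse inequality is precisely what produces the extra factor $h_T^{-1}$, and is therefore responsible for the right-hand side of (\ref{CO.0102}) scaling as $h^{-1}|\rho|_{1,h}$ rather than $|\rho|_{1,h}$. The secondary technical point is bookkeeping with the $\kappa$-weights so that every estimate terminates in $\|\kappa^{\frac12}\nabla\rho\|_T$; the fact that $\kappa$ is constant on each element is what lets $\kappa^{\frac12}$ pass through $\nabla$ and keeps the constant independent of the large parameter $\lambda_2$.
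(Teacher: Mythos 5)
Your proof is correct and follows essentially the same route as the paper: the identity (\ref{CO.0101}) is read off from the computation (\ref{CO.01}) preceding the lemma, and the bound (\ref{CO.0102}) is obtained by splitting $\|F(\rho)\|_{1,h}^2$ via (\ref{Vh-anothernorm}) into the weak-gradient and stabilization pieces, each controlled by $Ch^{-2}|\rho|_{1,h}^2$ through trace/inverse inequalities and the ellipticity assumption (\ref{ellipticity}) with $\lambda_1$ of unit size. The only cosmetic deviation is in the weak-gradient piece, where you integrate by parts and apply the inverse inequality to the polynomial $\kappa^{\frac12}\nabla\rho$, whereas the paper applies the inverse inequalities directly to the test function $\tau$ in the defining equation for $\nabla_w$; both are the same inverse-estimate idea and yield the bound $\|\nabla_w F(\rho)\|_T \leq Ch^{-1}\|\kappa\nabla\rho\|_T + Ch^{-\frac32}\|\jump{\rho}\|_{\partial T\cap\E_h^0}$.
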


\begin{proof}
The identity (\ref{CO.0101}) is given by (\ref{CO.01}). It remains
to derive the estimate (\ref{CO.0102}). To this end, write
$\{\bv_0^*, \bv_b^*\}=F(\rho)$. From the definition
(\ref{Vh-anothernorm}), we have
\begin{equation}\label{3barnorm:CO-100}
\|F(\rho)\|^2_{1,h} =\|\nabla_w\bv^*\|^2+
\sum_{T\in\T_h}h_T^{-1}\|\bv_0^*-\bv_b^*\|_{\partial T}^2.
\end{equation}
To estimate the first term $\|\nabla_w\bv^*\|^2$, we recall from the
definition of $\nabla_w\bv^*$ that
$$
(\nabla_w\bv^*,\tau)_T = - (\bv_0^*, \nabla\cdot\tau)_T + \langle
\bv_b^*, \tau\cdot\bn\rangle_{\partial T}.
$$
Using (\ref{artificial-flux}) we obtain
$$
(\nabla_w\bv^*,\tau)_T = (\kappa\nabla\rho, \nabla\cdot\tau)_T +
h^{-1}\langle \jump{\rho}\bn, \tau\cdot\bn\rangle_{\partial
T\cap\E_h^0}.
$$
The above equation, together with the usual inverse inequality for
finite element functions implies
$$
\|\nabla_w\bv^*\|_T \leq C h^{-1} \|\kappa\nabla\rho\|_T + C
h^{-\frac32}\|\jump{\rho}\|_{\partial T\cap\E_h^0}.
$$
Thus, from the assumption (\ref{ellipticity}) we have
$$
\|\nabla_w\bv^*\|_T^2 \leq C h^{-2} \lambda_1^{-1}
\|\kappa^{\frac12}\nabla\rho\|_T^2 + C
h^{-3}\|\jump{\rho}\|_{\partial T\cap\E_h^0}^2.
$$
Summing over all element $T\in\T_h$ yields
\begin{eqnarray}\nonumber
\|\nabla_w\bv^*\|^2 &\leq& Ch^{-2}\lambda_1^{-1} \sum_{T\in \T_h}
(\kappa\nabla\rho, \nabla\rho)_T+h^{-3} \sum_{e\in\E_h^0}
\|\jump{\rho}\|_{e}^2\\
&\leq& C h^{-2} |\rho|^2_{1,h},\label{CO-200}
\end{eqnarray}
where we have used the assumption that $\lambda_1$ is of unit size.
As to the second term on the right-hand side of
(\ref{3barnorm:CO-100}), we use (\ref{artificial-flux}) and the
trace inequality (\ref{trace}) to obtain
\begin{eqnarray*}
\|\bv_0^*-\bv_b^*\|_{\partial T}^2&\leq& 2
\|\kappa\nabla\rho\|^2_{\partial T}+2
h^{-2}\|\jump{\rho}\|^2_{\partial T\cap\E_h^0}\\
&\leq& C (h^{-1}\|\kappa\nabla\rho\|^2_{T}+
h^{-2}\|\jump{\rho}\|^2_{\partial T\cap\E_h^0}).
\end{eqnarray*}
Thus, it follows from (\ref{ellipticity}) that
\begin{eqnarray}
\sum_{T\in\T_h} h_T^{-1}\|\bv_0^*-\bv_b^*\|_{\partial T}^2 &\leq& C
h^{-2}\lambda_1^{-1}\sum_{T\in\T_h}
\|\kappa^{\frac12}\nabla\rho\|_T^2 +
Ch^{-3}\sum_{e\in\E_h^0}\|\jump{\rho}\|^2_{e}\nonumber\\
&\leq& C h^{-2} |\rho|_{1,h}^2.\label{CO-300}
\end{eqnarray}
Here we again used the fact that $\lambda_1$ is of unit size.
Substituting (\ref{CO-200}) and (\ref{CO-300}) into
(\ref{3barnorm:CO-100}) yields the desired estimate (\ref{CO.0102}).
This completes the proof of the lemma.
\end{proof}

\medskip
\begin{lemma}
The weak Galerkin finite element scheme (\ref{wg1})-(\ref{wg2}) has
one and only one solution.
\end{lemma}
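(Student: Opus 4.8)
The plan is to exploit the fact that (\ref{wg1})--(\ref{wg2}) is a square linear system on the finite-dimensional space $V_h\times W_h$: the number of equations equals the number of unknowns, so existence of a solution is equivalent to uniqueness. It therefore suffices to show that the homogeneous problem, in which the right-hand side data vanishes ($\bbf=0$), admits only the trivial solution $\bu_h=0$, $p_h=0$.

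First I would set $\bbf=0$ and test with $\bv=\bu_h$ in (\ref{wg1}) and $q=p_h$ in (\ref{wg2}). Adding (\ref{wg2}) (with $q=p_h$) to (\ref{wg1}) (with $\bv=\bu_h$) cancels the coupling term $b(\bu_h,p_h)$ and leaves $a(\bu_h,\bu_h)=0$. By the coercivity identity (\ref{elliptic}) this gives $\3bar\bu_h\3bar=0$. Since $\3bar\cdot\3bar$ is a genuine norm on $V_h$, this forces $\bu_h=0$: indeed, the first term $\|\kappa^{-\frac12}\bu_0\|^2$ of (\ref{3barnorm}) forces $\bu_0=0$ in view of the ellipticity bound (\ref{ellipticity}), and the stabilization term $\sum_{T\in\T_h}h_T^{-1}\|\bu_0-\bu_b\|_{\partial T}^2$ then forces $\bu_b=\bu_0=0$.

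With $\bu_h=0$, equation (\ref{wg1}) reduces to $b(\bv,p_h)=0$ for all $\bv\in V_h$. Here the classical Babu\u{s}ka--Brezzi route would require verifying an inf--sup condition uniform in $\kappa$, which is precisely the delicate point flagged at the start of this section. Instead I would simply choose the artificial flux $\bv=F(p_h)\in V_h$ from (\ref{artificial-flux}) and invoke Lemma \ref{Lemma:inf-sup}: by (\ref{CO.0101}) we obtain $0=b(F(p_h),p_h)=|p_h|^2_{1,h}$. The definition (\ref{H1norm}) of $|\cdot|_{1,h}$ then shows $\nabla p_h=0$ on every $T\in\T_h$ and $\jump{p_h}=0$ on every interior edge, so $p_h$ is a global constant; the mean-zero constraint $p_h\in L_0^2(\Omega)$ finally gives $p_h=0$.

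I do not anticipate a serious obstacle in this argument; the only conceptual subtlety is that a direct verification of the Brezzi inf--sup condition would be awkward because of the large variation of the permeability tensor $\kappa$. The artificial-flux construction of Lemma \ref{Lemma:inf-sup} is exactly what bypasses this difficulty, reducing pressure uniqueness to the elementary observation that $|p_h|_{1,h}=0$ forces $p_h$ to be constant.
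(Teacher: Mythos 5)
Your proposal is correct and follows essentially the same route as the paper's own proof: the square-system reduction to uniqueness, testing with $\bv=\bu_h$ and $q=p_h$ to get $\3bar\bu_h\3bar=0$, and then choosing the artificial flux $\bv=F(p_h)$ from Lemma \ref{Lemma:inf-sup} to conclude $|p_h|_{1,h}=0$ and hence $p_h=0$. The only difference is that you spell out two details the paper leaves implicit (why $\3bar\bu_h\3bar=0$ forces $\bu_b=\bu_0=0$, and why $|p_h|_{1,h}=0$ plus the mean-zero constraint forces $p_h=0$), which is fine.
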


\begin{proof}
Since the number of unknowns is the same as the number of equations,
then the solution existence is equivalent to its uniqueness. Thus,
it suffices to show that the homogeneous problem (i.e., $f=0$) has
only trivial solutions. To this end, assume that $f=0$ in
(\ref{wg1}). By letting $\bv=\bu_h$ in (\ref{wg1}) and $q=p_h$ in
(\ref{wg2}) we obtain
$$
a(\bu_h, \bu_h) - b(\bu_h, p_h) = 0,\quad b(\bu_h,p_h)=0.
$$
It follows that
$$
\3bar\bu_h\3bar = \sqrt{a(\bu_h, \bu_h)}=0,
$$
and hence $u_h=0$.

To show $p_h=0$, we use the equation (\ref{wg1}) and the face that
$f=0$ and $\bu_h=0$ we obtain
$$
b(\bv, p_h)=0.
$$
By letting $\bv=F(p_h)$ be the artificial flux of $p_h$, we have
from (\ref{CO.0101}) that
$$
0=b(F(p_h), p_h)= |p_h|_{1,h}.
$$
Thus, $p_h=0$ and the lemma is completely proved.
\end{proof}

\section{Error Equations}\label{Section:errorequations}

Denote by $Q_{0}$ the $L^2$ projection operator from $[L^2(T)]^d$
onto $[P_k(T)]^d$. For each edge/face $e\in {\cal E}_h$, denote by
$Q_{b}$ the $L^2$ projection from $[L^2(e)]^d$ onto $[P_{k}(e)]^d$.
We shall combine $Q_0$ with $Q_b$ by writing $Q_h=\{Q_0,Q_b\}$. In
addition, let $\bbQ_h$ and ${\bf Q}_h$ be two local $L^2$
projections onto $P_{k-1}(T)$ and $[P_{k-1}(T)]^{d\times d}$,
respectively.

\begin{lemma}\label{lem-1-0}
The projection operators $Q_h$, ${\bf Q}_h$, and $\bbQ_h$ satisfy
the following commutative properties
\begin{eqnarray}
\nabla_w (Q_h \bv) &=& {\bf Q}_h (\nabla\bv),\qquad\forall \ \bv\in
[H^1(\Omega)]^d,\label{key}\\
\label{key11} \nabla_w\cdot (Q_h\bv)
&=&\bbQ_h(\nabla\cdot\bv),\qquad\forall\ \bv\in H({\rm
div},\Omega).\label{key1}
\end{eqnarray}
\end{lemma}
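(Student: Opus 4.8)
The plan is to prove the two commutative properties (\ref{key}) and (\ref{key1}) directly from the definitions of the discrete weak gradient (\ref{d-g}) and discrete weak divergence (\ref{d-d}), by testing against arbitrary polynomials and invoking the defining orthogonality properties of the $L^2$ projections. These are the standard "Fortin-type" commutativity relations for weak Galerkin projections, and the key fact I would exploit throughout is that an $L^2$ projection $Q_0$ (resp.\ $Q_b$) reproduces the polynomial it is tested against: for instance, $(Q_0\bv,\varphi)_T=(\bv,\varphi)_T$ whenever $\varphi$ is a polynomial of the admissible degree, and similarly for $Q_b$ on each face.

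First I would establish (\ref{key}). Fix $\bv\in[H^1(\Omega)]^d$ and an element $T$, and let $\tau\in[P_{k-1}(T)]^{d\times d}$ be an arbitrary test matrix. Starting from the definition of $\nabla_w(Q_h\bv)$ in (\ref{d-g}) applied to the weak function $Q_h\bv=\{Q_0\bv,Q_b\bv\}$, I get
\[
(\nabla_w(Q_h\bv),\tau)_T=-(Q_0\bv,\nabla\cdot\tau)_T+\langle Q_b\bv,\tau\cdot\bn\rangle_{\partial T}.
\]
Since $\nabla\cdot\tau\in[P_{k-2}(T)]^d\subset[P_k(T)]^d$ and $\tau\cdot\bn$ restricted to each face $e$ lies in $[P_{k-1}(e)]^d\subset[P_k(e)]^d$, the defining properties of the projections give $(Q_0\bv,\nabla\cdot\tau)_T=(\bv,\nabla\cdot\tau)_T$ and $\langle Q_b\bv,\tau\cdot\bn\rangle_{\partial T}=\langle\bv,\tau\cdot\bn\rangle_{\partial T}$. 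Substituting these and then applying integration by parts (the usual Green's formula) to fold the two terms back into $(\nabla\bv,\tau)_T$ yields $(\nabla_w(Q_h\bv),\tau)_T=(\nabla\bv,\tau)_T$. Because $\tau$ ranges over all of $[P_{k-1}(T)]^{d\times d}$, and $\nabla_w(Q_h\bv)$ lives in that same space, the right side equals $({\bf Q}_h(\nabla\bv),\tau)_T$ by the definition of the projection ${\bf Q}_h$, so the two weak-gradient representatives coincide.

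The argument for (\ref{key1}) is entirely parallel: I would take $\bv\in H(\mathrm{div},\Omega)$ and test against an arbitrary $q\in P_{k-1}(T)$, expand $(\nabla_w\cdot(Q_h\bv),q)_T=-(Q_0\bv,\nabla q)_T+\langle Q_b\bv,q\bn\rangle_{\partial T}$ via (\ref{d-d}), use that $\nabla q\in[P_{k-2}(T)]^d$ and $q\bn|_e\in[P_{k-1}(e)]^d$ to strip off the projections, and integrate by parts to recover $(\nabla\cdot\bv,q)_T$, which equals $(\bbQ_h(\nabla\cdot\bv),q)_T$ since $q\in P_{k-1}(T)$. The main point of care — and the only place a subtlety could hide — is the degree bookkeeping at the boundary: one must verify that $\tau\cdot\bn$ and $q\bn$ genuinely fall within the polynomial degrees that $Q_b$ reproduces exactly on each face, which they do since $Q_b$ projects onto degree $k$ while these traces have degree at most $k-1$. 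There is no serious obstacle here; the whole proof is a matter of matching polynomial degrees so that each projection acts as the identity against the test function, and then invoking standard integration by parts.
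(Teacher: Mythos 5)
Your proof is correct and is exactly the standard argument: the paper itself does not prove this lemma but defers to \cite{wy} and \cite{wy-mixed}, and the argument there is the same one you give---strip the projections $Q_0$ and $Q_b$ using the degree bookkeeping $\nabla\cdot\tau\in[P_{k-2}(T)]^d$, $\tau\cdot\bn|_e\in[P_{k-1}(e)]^d$ (resp.\ $\nabla q$, $q\bn$), integrate by parts, and identify the result with ${\bf Q}_h(\nabla\bv)$ (resp.\ $\bbQ_h(\nabla\cdot\bv)$) since both representatives live in the same polynomial space. The one caveat, inherited from the lemma's statement rather than introduced by you, is that for $\bv$ merely in $H({\rm div},\Omega)$ the full trace entering $Q_b\bv$ need not lie in $[L^2(e)]^d$ (only the normal component is well defined), so (\ref{key1}) should be read for $\bv$ with $L^2$ boundary traces, e.g.\ $\bv\in[H^1(\Omega)]^d$, which is how the identity is actually used in the paper.
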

The proof of Lemma \ref{lem-1-0} is straightforward and can be found
in \cite{wy} and \cite{wy-mixed}.

\smallskip
The following are two useful identities:
\begin{eqnarray}
(\nabla_w(Q_h\bu),\nabla_w\bv)_T&=&(\nabla\bu,\nabla\bv_0)_T-\l \bv_0-\bv_b,{\bf Q}_h(\nabla\bu)\cdot\bn\r_\pT.\label{m1}\\
(\bv_0,\nabla p)&=&-(\dw\bv,\bbQ_hp)+\sum_{T\in\T_h}\langle
\bv_0-\bv_b,(p-\bbQ_hp)\bn\rangle_\pT.\label{m2}
\end{eqnarray}
Equations (\ref{m1}) and (\ref{m2}) can be verified easily; they
were first derived in \cite{wy} and \cite{wy-mixed}, respectively.

\smallskip
Introduce two functionals as follows
\begin{eqnarray}
l_1(\bv,\bu)&=&\sum_{T\in\T_h}\l\bv_0-\bv_b,\nabla\bu\cdot\bn-{\bf Q}_h(\nabla\bu)\cdot\bn\r_\pT,\label{l1}\\
l_2(\bv,p)&=&\sum_{T\in\T_h}\langle
\bv_0-\bv_b,(p-\bbQ_hp)\bn\rangle_\pT.\label{l2}
\end{eqnarray}

\begin{lemma}\label{Lemma:4.1}
Let $\bu_h=\{\bu_0,\bu_b\}$ be the WG finite element solution
arising from the {\sc Weak Galerkin Algorithm 1}. Let
$\be_h=\{\be_0,\;\be_b\}=\{Q_0\bu-\bu_0,\;Q_b\bu-\bu_b\}$ and
$\varepsilon_h=\bbQ_hp-p_h$ be the error between the WG finite
element solution and the $L^2$ projection of the exact solution.
Then, the following equations are satisfied
\begin{eqnarray}
a(\be_h,\bv)-b(\bv,\varepsilon_h)&=& \phi_{\bu,p}(\bv),\label{ee1}\\
b(\be_h,q)&=&0,\label{ee2}
\end{eqnarray}
for all $(\bv; q)\in V_h\times W_h$. Here
\begin{equation}
\phi_{\bu,p}(\bv)=l_1(\bv,\bu)-l_2(\bv,p)+s(Q_h\bu,\bv).\label{phi-up}
\end{equation}
\end{lemma}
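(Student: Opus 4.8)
The plan is to show that the $L^2$ projection $(Q_h\bu, \bbQ_h p)$ of the exact solution satisfies the same variational system as the WG solution $(\bu_h, p_h)$ up to the consistency functional $\phi_{\bu,p}$, and then subtract the two systems. The identity (\ref{ee2}) is the easy half: by the commutative property (\ref{key1}) we have $\nabla_w\cdot(Q_h\bu) = \bbQ_h(\nabla\cdot\bu) = 0$, since $\nabla\cdot\bu = 0$ by (\ref{cont}), hence $b(Q_h\bu, q) = 0$ for all $q\in W_h$; subtracting (\ref{wg2}) gives $b(\be_h, q) = 0$.

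For (\ref{ee1}), I would first test the momentum equation (\ref{moment}) against $\bv_0$ on each element and integrate by parts, writing $(-\Delta\bu, \bv_0)_T = (\nabla\bu, \nabla\bv_0)_T - \l \nabla\bu\cdot\bn, \bv_0\r_\pT$. Because the exact solution is smooth enough that $\nabla\bu\cdot\bn$ is single-valued across each interior face, and $\bv_b = 0$ on $\partial\Omega$, the sum $\sum_{T\in\T_h} \l \nabla\bu\cdot\bn, \bv_b\r_\pT$ vanishes, so I may insert $\bv_b$ at no cost and collapse the boundary term into $-\sum_{T\in\T_h} \l \nabla\bu\cdot\bn, \bv_0 - \bv_b\r_\pT$. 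Next I would rewrite the volume term $\sum_{T\in\T_h} (\nabla\bu, \nabla\bv_0)_T$ via (\ref{m1}); the face term it produces combines with the previous one to give exactly $-l_1(\bv, \bu)$, yielding $(-\Delta\bu, \bv_0) = (\nabla_w(Q_h\bu), \nabla_w\bv) - l_1(\bv, \bu)$.

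For the remaining two terms I would apply (\ref{m2}) to the pressure, obtaining $(\nabla p, \bv_0) = -b(\bv, \bbQ_h p) + l_2(\bv, p)$, and use the elementwise-constant assumption on $\kappa$ together with the definition of $Q_0$ to replace the reaction term by $(\A\bu, \bv_0) = (\A Q_0\bu, \bv_0)$. Collecting all three contributions and recognizing that $(\nabla_w(Q_h\bu), \nabla_w\bv) + (\A Q_0\bu, \bv_0) = a(Q_h\bu, \bv) - s(Q_h\bu, \bv)$ shows that $(Q_h\bu, \bbQ_h p)$ satisfies $a(Q_h\bu, \bv) - b(\bv, \bbQ_h p) = (\bbf, \bv_0) + \phi_{\bu,p}(\bv)$; subtracting (\ref{wg1}) then gives (\ref{ee1}).

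I expect the main obstacle to be the careful bookkeeping of the face integrals rather than any deep estimate. Two points need attention: first, justifying $\sum_{T\in\T_h} \l \nabla\bu\cdot\bn, \bv_b\r_\pT = 0$, which relies on the continuity of the normal flux of the exact solution across interior faces and on $\bv_b = 0$ along $\partial\Omega$; second, tracking the stabilization term. The term $s(Q_h\bu, \bv)$ does \emph{not} cancel: unlike the continuous solution, $Q_0\bu$ and $Q_b\bu$ differ on $\pT$, so expanding $a(Q_h\bu, \bv)$ leaves a leftover $-s(Q_h\bu, \bv)$ that must be carried to the right-hand side, and this is precisely the third contribution to $\phi_{\bu,p}$.
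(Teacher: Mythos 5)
Your proposal is correct and follows essentially the same route as the paper's proof: test the momentum equation against $\bv_0$, integrate by parts using the single-valuedness of $\nabla\bu\cdot\bn$ and $\bv_b=0$ on $\partial\Omega$, apply the identities (\ref{m1}) and (\ref{m2}) together with the commutativity (\ref{key1}) for the divergence equation, insert the stabilization term, and subtract the scheme (\ref{wg1})--(\ref{wg2}). Your handling of the reaction term via the elementwise-constant $\kappa$ and the definition of $Q_0$, and your observation that $s(Q_h\bu,\bv)$ survives as part of $\phi_{\bu,p}$, match the paper exactly.
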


\begin{proof} Testing (\ref{moment}) by $\bv_0$ with
$\bv=\{\bv_0,\;\bv_b\}\in V_h$ gives
\begin{equation}\label{m3}
-(\Delta\bu,\bv_0)+(\A\bu, \bv_0)+(\nabla p,\bv_0)=(\bbf, \bv_0).
\end{equation}
It follows from the integration by parts that
\[
-(\Delta\bu,\bv_0)=\sum_{T\in\T_h}(\nabla\bu,\nabla\bv_0)_T
-\sum_{T\in\T_h}\l\bv_0-\bv_b,\nabla\bu\cdot\bn\r_\pT,
\]
where we have used the fact that $\sum_{T\in\T_h}\langle\bv_b,
\nabla\bu\cdot\bn\rangle_\pT=0$. Using (\ref{m1}) and the equation
above, we obtain
\begin{eqnarray}
-(\Delta\bu,\bv_0)=(\nabla_w
(Q_h\bu),\nabla_w\bv)-l_1(\bv,\bu).\label{m4}
\end{eqnarray}
Using (\ref{m2}), (\ref{m4}) and the definition of $Q_0$, we have
\begin{eqnarray}
&&-(\Delta\bu,\bv_0)+(\A\bu, \bv_0)+(\nabla p,\bv_0)=(\nabla_w (Q_h\bu),\nabla_w\bv)+
(\A\bu,\bv_0)\nonumber\\
&&\hskip .5in -(\dw\bv,\bbQ_hp)- l_1(\bv,\bu)+l_2(\bv,p)\label{mmm}
\end{eqnarray}
It follows from (\ref{m3}) and (\ref{mmm}),
\[
(\nabla_w(Q_h\bu),\nabla_w\bv)+(\A Q_0\bu,\bv_0)-(\dw\bv,\bbQ_hp)
=(\bbf,\bv_0)+ l_1(\bv,\bu)-l_2(\bv,p).
\]
Adding $s(Q_h\bu,\bv)$ to the both sides of the equation above gives
\begin{equation}\label{m5}
a(Q_h\bu,\bv)-b(\bv,\bbQ_hp)=(\bbf,\bv_0)+ \phi_{\bu,p}(\bv).
\end{equation}
The difference of (\ref{m5}) and (\ref{wg1}) yields the following equation,
\begin{eqnarray}
a(\be_h,\bv)-b(\bv,\varepsilon_h)= \phi_{\bu,p}(\bv) \label{eee1}
\end{eqnarray}
for all $\bv\in V_h$. Next, testing Equation (\ref{cont}) by $q\in
W_h$ and using (\ref{key11}) gives
\begin{equation}\label{m6}
(\nabla\cdot\bu,q)=(\nabla_w\cdot Q_h\bu,q)=0.
\end{equation}
The difference of (\ref{m6}) and (\ref{wg2}) yields the following
equation.
\begin{eqnarray}
b(\be_h,q)=0,\quad\forall q\in W_h.\label{eee2}
\end{eqnarray}
Combining (\ref{eee1}) and (\ref{eee2}) completes the proof of the
lemma.
\end{proof}

\section{Preparation for Error Estimates}

In this section, we will derive some estimates that can be used in
the next section to obtain uniform convergence for velocity and
pressure approximations.

The following lemma provides some approximation properties for the
projections $Q_h$, ${\bf Q}_h$ and $\bbQ_h$.  Observe that the
underlying mesh $\T_h$ is assumed to be sufficiently general to
allow polygons or polyhedra. A proof of the lemma can be found in
\cite{wy-mixed}.

\begin{lemma}\label{lem-1}
Let $\T_h$ be a finite element partition of $\Omega$ satisfying the
shape regularity assumption as specified in \cite{wy-mixed} and
$\bw\in [H^{r+1}(\Omega)]^d$ and $\rho\in H^{r}(\Omega)$ with $1\le
r\le k$. Then, for $0\le s\le 1$ we have
\begin{eqnarray}
&&\sum_{T\in\T_h} h^{2s}_T\|\bw-Q_0\bw\|_{T,s}^2\le h^{2(r+1)}
\|\bw\|^2_{r+1},\label{Qh}\\
&&\sum_{T\in\T_h} h^{2s}_T\|\nabla\bw-{\bf Q}_h(\nabla\bw)\|^2_{T,s}
\le Ch^{2r}
\|\bw\|^2_{r+1},\label{Rh}\\
&&\sum_{T\in\T_h} h^{2s}_T\|\rho-\bbQ_h\rho\|^2_{T,s} \le
Ch^{2r}\|\rho\|^2_{r}.\label{Lh}
\end{eqnarray}
Here $C$ denotes a generic constant independent of the meshsize $h$
and the functions in the estimates.
\end{lemma}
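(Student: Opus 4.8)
The plan is to prove all three estimates by the same element-by-element argument: reduce each to a local polynomial-approximation bound, combine it with a polynomial inverse inequality to handle the $H^1$ part, and then sum over $T\in\T_h$. Since the three bounds differ only in the operator being projected, its target polynomial degree, and the regularity of the function, I would carry out (\ref{Qh}) in detail and indicate the routine modifications for (\ref{Rh}) and (\ref{Lh}).

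First I would fix an element $T$ and exploit that $Q_0$ is the $L^2(T)$-orthogonal projection onto $[P_k(T)]^d$, hence the best $L^2$ approximation: for any $\pi_T\in[P_k(T)]^d$ one has $\|\bw-Q_0\bw\|_T\le\|\bw-\pi_T\|_T$. Choosing $\pi_T$ to be the averaged Taylor polynomial of degree $r$ (recall $r\le k$) and invoking the Bramble--Hilbert lemma on $T$ gives $\|\bw-\pi_T\|_{s,T}\le C h_T^{r+1-s}|\bw|_{r+1,T}$ for $0\le s\le r+1$; this settles the case $s=0$ at once.

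For $s=1$ the difficulty is that $Q_0$ is not manifestly stable in the $H^1$ seminorm on a general polygon, so I would avoid estimating $\|\bw-Q_0\bw\|_{1,T}$ head on. Instead I would insert $\pi_T$ and use the triangle inequality,
$$\|\bw-Q_0\bw\|_{1,T}\le\|\bw-\pi_T\|_{1,T}+\|\pi_T-Q_0\bw\|_{1,T},$$
bounding the first term by Bramble--Hilbert and the second---now a difference of polynomials---by the inverse inequality $\|\pi_T-Q_0\bw\|_{1,T}\le C h_T^{-1}\|\pi_T-Q_0\bw\|_T$ together with $\|\pi_T-Q_0\bw\|_T\le 2\|\bw-\pi_T\|_T\le C h_T^{r+1}|\bw|_{r+1,T}$. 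These give $h_T\|\bw-Q_0\bw\|_{1,T}\le C h_T^{r+1}|\bw|_{r+1,T}$; squaring, summing over $T$, and using $\sum_T|\bw|_{r+1,T}^2\le\|\bw\|_{r+1}^2$ yields (\ref{Qh}). The bounds (\ref{Rh}) and (\ref{Lh}) follow identically once one notes that $\nabla\bw\in[H^{r}(\Omega)]^{d\times d}$ and $\rho\in H^r(\Omega)$ are each approximated by the $L^2$ projections ${\bf Q}_h$ and $\bbQ_h$ onto degree-$(k-1)$ polynomials, with $k-1\ge r-1$ guaranteeing the degree is high enough to reproduce order $r$, so that $\|\nabla\bw-{\bf Q}_h(\nabla\bw)\|_{s,T}\le C h_T^{r-s}|\bw|_{r+1,T}$ and $\|\rho-\bbQ_h\rho\|_{s,T}\le C h_T^{r-s}|\rho|_{r,T}$.

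The main obstacle is that $\T_h$ consists of general polygons/polyhedra rather than simplices, so the usual scaling to a single reference element is unavailable; every ingredient above---the Bramble--Hilbert estimate for the averaged Taylor polynomial and the polynomial inverse inequality---must be invoked in the scaling-invariant form valid on domains star-shaped with respect to a ball of radius comparable to $h_T$. This is exactly what the shape-regularity hypotheses of \cite{wy-mixed} provide, and it is where the generic constant $C$ inherits its dependence on the regularity parameters. Accordingly, the cleanest route is to cite the polygonal Bramble--Hilbert and inverse-inequality machinery established in \cite{wy-mixed} and assemble it as above, as the reference indicates.
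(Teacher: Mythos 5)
Your proposal is correct, but note that the paper itself offers no proof of this lemma---it simply cites \cite{wy-mixed}---and the argument you reconstruct (best-approximation property of the $L^2$ projections, Bramble--Hilbert on elements star-shaped with respect to a ball, and an inverse inequality to handle the $H^1$ part of $\bw-Q_0\bw$) is precisely the standard machinery developed in that reference. The only cosmetic point is that you treat the endpoint cases $s=0,1$ explicitly; intermediate $0<s<1$ (if one insists on reading the statement for fractional $s$) follows at once by space interpolation, and in fact the paper only ever invokes the lemma with $s\in\{0,1\}$.
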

\smallskip

Let $T$ be an element with $e$ as a face.  For any function $g\in
H^1(T)$, the following
trace inequality has been proved to be valid for general meshes described in
\cite{wy-mixed},
\begin{equation}\label{trace}
\|g\|_{e}^2 \leq C \left( h_T^{-1} \|g\|_T^2 + h_T \|\nabla
g\|_{T}^2\right).
\end{equation}
For any finite element function $\bv\in V_h$, we introduce the
following semi-norm:
\begin{equation}\label{new-seminorm}
|v|_h=\left(\sum_{T\in\T_h}h_T^{-1}\|\bv_0-\bv_b\|^2_{\pT}\right)^{1/2}.
\end{equation}

\begin{lemma}\label{lemma5.2}
Let $r\in [1,k]$. Assume that $\bw\in [H^{r+1}(\Omega)]^d$ and
$\rho\in H^r(\Omega)$. Then for any $\bv\in V_h$ we have
\begin{eqnarray}
|s(Q_h\bw,\bv)|&\le& Ch^{r}\|\bw\|_{r+1} |\bv|_h,\label{mmm1}\\
|l_1(\bv,\bw)|&\le& Ch^{r}\|\bw\|_{r+1} |\bv|_h,\label{mmm2}\\
|l_2(\bv,\rho)|&\le& Ch^{r}\|\rho\|_{r} |\bv|_h,\label{mmm3}
\end{eqnarray}
where $l_1(\cdot,\cdot)$ and $l_2(\cdot,\cdot)$ are defined in
(\ref{l1}) and (\ref{l2}). Thus, the following estimate holds true
\begin{equation}\label{phi-up-estimate}
|\phi_{\bw,\rho}(\bv)|\leq C h^r (\|\bw\|_{r+1}+\|\rho\|_{r})|
\bv|_h.
\end{equation}
\end{lemma}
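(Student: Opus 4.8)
The plan is to treat the three functionals uniformly, since each has the common structure $\sum_{T\in\T_h}\l \bv_0-\bv_b,\psi_T\r_\pT$ for an appropriate face residual $\psi_T$: for $l_1$ one has $\psi_T=(\nabla\bw-{\bf Q}_h(\nabla\bw))\cdot\bn$, for $l_2$ one has $\psi_T=(\rho-\bbQ_h\rho)\bn$, and for $s(Q_h\bw,\bv)$ one has $\psi_T=h_T^{-1}(Q_0\bw-Q_b\bw)$. First I would apply the Cauchy--Schwarz inequality face-by-face and then a discrete Cauchy--Schwarz over the elements, inserting and extracting the weight $h_T^{-1/2}$:
\[
\Big|\sum_{T\in\T_h}\l \bv_0-\bv_b,\psi_T\r_\pT\Big|\le \Big(\sum_{T\in\T_h}h_T^{-1}\|\bv_0-\bv_b\|_\pT^2\Big)^{1/2}\Big(\sum_{T\in\T_h}h_T\|\psi_T\|_\pT^2\Big)^{1/2}.
\]
The first factor is exactly $|\bv|_h$ by (\ref{new-seminorm}), so in every case it remains to bound the second factor by $Ch^r\|\bw\|_{r+1}$ or $Ch^r\|\rho\|_r$.

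For $l_1(\bv,\bw)$ and $l_2(\bv,\rho)$ I would control $\|\psi_T\|_\pT$ through the trace inequality (\ref{trace}) applied to the projection error, which turns the face norm into the volume quantities $h_T^{-1}\|\cdot\|_T^2+h_T\|\nabla\cdot\|_T^2$. After multiplying by $h_T$ and summing over $T$, this produces precisely the element-local combinations $\sum_T\big(\|\cdot\|_T^2+h_T^2\|\nabla\cdot\|_T^2\big)$, whose weights match the left-hand sides of the approximation estimates (\ref{Rh}) with $s=0,1$ for $l_1$ and (\ref{Lh}) with $s=0,1$ for $l_2$. These yield (\ref{mmm2}) and (\ref{mmm3}) directly, without any further mesh hypothesis.

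The term $s(Q_h\bw,\bv)$ requires one extra observation: the weight to control is $h_T^{-1}\|Q_0\bw-Q_b\bw\|_\pT^2$, and the key point is that the trace of $Q_0\bw\in[P_k(T)]^d$ on each face $e$ already lies in $[P_k(e)]^d$, so the face projection $Q_b$ acts as the identity on it and hence $Q_0\bw-Q_b\bw=Q_b(Q_0\bw-\bw)$ on $e$. The $L^2$-stability of $Q_b$ then gives $\|Q_0\bw-Q_b\bw\|_e\le\|Q_0\bw-\bw\|_e$, after which the trace inequality (\ref{trace}) and the approximation estimate (\ref{Qh}) with $s=0,1$ (together with the shape regularity that aligns the local $h_T$ and global $h$ powers) produce (\ref{mmm1}). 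Finally, (\ref{phi-up-estimate}) follows from the definition (\ref{phi-up}) of $\phi_{\bw,\rho}$ and the triangle inequality applied to (\ref{mmm1})--(\ref{mmm3}).

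I expect the main obstacle to be the $s$-term identity $Q_0\bw-Q_b\bw=Q_b(Q_0\bw-\bw)$: recognizing that the face projection $Q_b$ fixes the trace of the interior polynomial $Q_0\bw$ is what allows the raw difference of two \emph{distinct} projections to be rewritten as a single, estimable projection error and thus brought within reach of (\ref{Qh}). The remaining work in all three cases is the routine but careful bookkeeping of the $h_T$ powers so that the trace inequality feeds cleanly into (\ref{Qh})--(\ref{Lh}).
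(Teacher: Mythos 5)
Your proposal is correct and follows essentially the same route as the paper: a weighted Cauchy--Schwarz argument extracting the factor $|\bv|_h$, followed by the trace inequality (\ref{trace}) and the approximation estimates (\ref{Qh})--(\ref{Lh}) applied to the remaining face factors, with the triangle inequality giving (\ref{phi-up-estimate}). Your handling of the $s$-term via the identity $Q_0\bw-Q_b\bw=Q_b(Q_0\bw-\bw)$ together with the $L^2$-stability of $Q_b$ is an equivalent reformulation of the paper's step, which instead uses the defining orthogonality of $Q_b$ against the polynomial test function $\bv_0-\bv_b$ on each face to replace $Q_b\bw$ by $\bw$; both observations reduce the term to the single projection error $Q_0\bw-\bw$, estimable by (\ref{Qh}).
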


\begin{proof}
Using the definition of $Q_b$, (\ref{trace}) and (\ref{Qh}), we have
\begin{eqnarray*}
|s(Q_h\bw,\bv)|&=&\left|\sum_{T\in\T_h} h_T^{-1}\langle Q_0\bw-Q_b\bw, \bv_0-\bv_b\rangle_\pT\right|\\
&=&\left|\sum_{T\in\T_h} h_T^{-1} \langle Q_0\bw-\bw, \bv_0-\bv_b\rangle_\pT\right|\\
&\le& \left(\sum_{T\in\T_h}(h_T^{-2}\|Q_0\bw-\bw\|_T^2+\|\nabla (Q_0\bw-\bw)\|_T^2)\right)^{1/2}
|\bv|_h\\
&\le& Ch^{r}\|\bw\|_{r+1} |\bv|_h.
\end{eqnarray*}
Similarly, it follows from (\ref{trace}) and (\ref{Rh})
\begin{eqnarray*}
|l_1(\bv,\bw)|&\equiv&\left|\sum_{T\in\T_h}\l\bv_0-\bv_b,\nabla\bw\cdot\bn-{\bf Q}_h(\nabla\bw)\cdot\bn\r_\pT\right|\\
&\le& \left(\sum_{T\in\T_h}h\|\nabla\bw\cdot\bn-{\bf
Q}_h(\nabla\bw)\cdot\bn\|_\pT^2\right)^{1/2}
|\bv|_h\\
&\le& Ch^{r}\|\bw\|_{r+1} |\bv|_h.
\end{eqnarray*}
Using (\ref{trace}) and (\ref{Lh}), we have
\begin{eqnarray*}
|l_2(\bv,\rho)|&\equiv&\left|\sum_{T\in\T_h}\langle \bv_0-\bv_b,(\rho-\bbQ_h\rho)\bn\rangle_\pT
\right|\\
&\le& \left(\sum_{T\in\T_h}h_T\| \rho-\bbQ_h\rho\|_\pT^2\right)^{1/2}
|\bv|_h\\
&\le& Ch^{r}\|\rho\|_{r} |\bv|_h.
\end{eqnarray*}
This completes the proof of the lemma.
\end{proof}

\section{Error Estimates}\label{Section:errores}

The goal of this section is to establish some error estimates for
the approximate velocity $\bu_h$ in the triple-bar norm and for the
approximate pressure $p_h$ in the usual $L^2$ norm. Our main result
can be stated as follows.
\smallskip

\begin{theorem}\label{h1-bd}
Let $(\bu; p)\in  [H_0^1(\Omega)\cap H^{k+1}(\Omega)]^d\times
(L_0^2(\Omega)\cap H^{k}(\Omega))$ with $k\ge 1$ and $(\bu_h;p_h)\in
V_h\times W_h$ be the solutions of (\ref{moment})-(\ref{bc}) and
(\ref{wg1})-(\ref{wg2}) respectively. Then, there exists a constant
$C$ independent of the meshsize $h$ and the spectral radius of
$\kappa$ such that
\begin{eqnarray}
\3bar \bu-\bu_h\3bar+h|p-p_h|_{1,h}&\le&
Ch^{k}(\|\bu\|_{k+1}+\|p\|_{k}).\label{err1}
\end{eqnarray}
In particular, we have the following weighted-$L^2$ error estimate:
\begin{eqnarray}
\|\kappa^{-\frac12}(\bu-\bu_h)\|&\le&
Ch^{k}(\|\bu\|_{k+1}+\|p\|_{k}).\label{L2err1}
\end{eqnarray}
\end{theorem}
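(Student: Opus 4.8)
The plan is to work from the error equations of Lemma \ref{Lemma:4.1}, which tie $\be_h=\{Q_0\bu-\bu_0,Q_b\bu-\bu_b\}$ and $\varepsilon_h=\bbQ_hp-p_h$ together through (\ref{ee1})--(\ref{ee2}), and to bound the velocity error first. Taking $\bv=\be_h$ in (\ref{ee1}) and $q=\varepsilon_h$ in (\ref{ee2}), the coupling term disappears because (\ref{ee2}) gives $b(\be_h,\varepsilon_h)=0$, leaving $a(\be_h,\be_h)=\phi_{\bu,p}(\be_h)$. The exact coercivity (\ref{elliptic}) turns the left-hand side into $\3bar\be_h\3bar^2$, while Lemma \ref{lemma5.2} with $r=k$ bounds the right-hand side by $Ch^k(\|\bu\|_{k+1}+\|p\|_k)\,|\be_h|_h$; since $|\be_h|_h\le\3bar\be_h\3bar$ by (\ref{3barnorm}), one divides through to get $\3bar\be_h\3bar\le Ch^k(\|\bu\|_{k+1}+\|p\|_k)$. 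I would emphasize that the constant here is free of $\kappa$, because the coercivity is an identity and the bounds of Lemma \ref{lemma5.2} are purely trace and approximation estimates.

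For the pressure I would use the artificial flux $F(\varepsilon_h)$ of (\ref{artificial-flux}) as an inf-sup test function. Setting $\bv=F(\varepsilon_h)$ in (\ref{ee1}) and invoking (\ref{CO.0101}) gives $|\varepsilon_h|^2_{1,h}=a(\be_h,F(\varepsilon_h))-\phi_{\bu,p}(F(\varepsilon_h))$. The first term is controlled by the boundedness (\ref{bd}) together with $\3bar F(\varepsilon_h)\3bar\le Ch^{-1}|\varepsilon_h|_{1,h}$, which follows from (\ref{CO.0102}) once one notes that $\|\kappa^{-1/2}\bv_0^*\|=\|\kappa^{1/2}\nabla\varepsilon_h\|\le|\varepsilon_h|_{1,h}$ is a $\kappa$-independent quantity already sitting inside the $|\cdot|_{1,h}$ norm; the second term is handled by Lemma \ref{lemma5.2} and $|F(\varepsilon_h)|_h\le Ch^{-1}|\varepsilon_h|_{1,h}$. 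Dividing by $|\varepsilon_h|_{1,h}$ yields $|\varepsilon_h|_{1,h}\le Ch^{-1}\3bar\be_h\3bar+Ch^{k-1}(\|\bu\|_{k+1}+\|p\|_k)$, and inserting the velocity bound of the previous paragraph gives $h|\varepsilon_h|_{1,h}\le Ch^k(\|\bu\|_{k+1}+\|p\|_k)$.

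These two estimates deliver (\ref{err1}) and (\ref{L2err1}) once the exact solution is identified with its $L^2$ projection inside the mesh-dependent norms, so that $\3bar\bu-\bu_h\3bar$ and $|p-p_h|_{1,h}$ are read as $\3bar\be_h\3bar$ and $|\varepsilon_h|_{1,h}$. This identification is precisely what keeps the constant independent of $\lambda_2$: the genuine projection errors $\3bar\bu-Q_h\bu\3bar$ and $|p-\bbQ_hp|_{1,h}$ carry the weight $\|\kappa^{1/2}\nabla(\cdot)\|$ and would reintroduce the large eigenvalue. Finally, the weighted-$L^2$ bound (\ref{L2err1}) is immediate, since $\|\kappa^{-1/2}(Q_0\bu-\bu_0)\|$ is one of the three non-negative summands of $\3bar\be_h\3bar^2$ and is therefore already dominated by $\3bar\be_h\3bar$.

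The main obstacle I anticipate is the pressure step, and specifically the bookkeeping of powers of $h$. The artificial flux $F$ inflates norms by a factor $h^{-1}$, so the inf-sup argument returns $|\varepsilon_h|_{1,h}$ only up to $h^{-1}$ times the velocity error; closing the estimate hinges on the velocity bound already being of order $h^k$, one power sharper than a naive count would suggest. Running parallel to this, one must verify at every Cauchy--Schwarz step that the $\kappa$-weights appear only in the combinations that $\3bar\cdot\3bar$ and $|\cdot|_{1,h}$ are built to absorb, using that $\lambda_1$ is of unit size, so that $\lambda_2$ never surfaces in any constant; this $\kappa$-tracking is the delicate part of the whole argument.
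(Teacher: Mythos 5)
Your proposal is correct and follows essentially the same route as the paper: the same energy identity $\3bar\be_h\3bar^2=\phi_{\bu,p}(\be_h)$ (with the coupling killed by $b(\be_h,\varepsilon_h)=0$) combined with Lemma \ref{lemma5.2} for the velocity, and the same artificial-flux test function $\bv=F(\varepsilon_h)$ with its $h^{-1}$ inflation for the pressure, including the final multiplication by $h$ and insertion of the velocity bound. The only difference is bookkeeping in the pressure step: you bound $a(\be_h,F(\varepsilon_h))$ wholesale via (\ref{bd}) after sharpening (\ref{CO.0102}) to $\3bar F(\varepsilon_h)\3bar\le Ch^{-1}|\varepsilon_h|_{1,h}$, whereas the paper splits off the mass term and estimates $|(\kappa^{-1}\be_0,\kappa\nabla\varepsilon_h)|\le\|\kappa^{-\frac12}\be_0\|\,\|\kappa^{\frac12}\nabla\varepsilon_h\|$ separately --- both rest on the identical cancellation $\kappa^{-\frac12}\kappa=\kappa^{\frac12}$, so neither lets $\lambda_2$ into the constants, and your explicit remark that (\ref{err1})--(\ref{L2err1}) are to be read with $\bu,p$ identified with $Q_h\bu,\bbQ_h p$ matches what the paper's proof actually establishes.
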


\smallskip

\begin{proof}
Letting $\bv=\be_h$ in (\ref{ee1}) and $q=\varepsilon_h$ in
(\ref{ee2}) and adding the two resulting equations, we obtain
\begin{eqnarray}
\3bar \be_h\3bar^2&=&\phi_{\bu,p}(\be_h).\label{main}
\end{eqnarray}
Using the estimate (\ref{phi-up-estimate}) with $r=k, \bw=\bu,$ and
$\rho=p$ we arrive at
\begin{equation}\label{b-u}
\3bar \be_h\3bar^2 \le Ch^{k}(\|\bu\|_{k+1}+\|p\|_{k}) |\be_h|_h.
\end{equation}
It is trivial to see that
$$
|\be_h|_h \leq C\3bar \be_h\3bar.
$$
Substituting the above estimate into (\ref{b-u}) yields the desired
error estimate for $\bu_h$.

To derive an estimate for $\varepsilon_h$, we have from (\ref{ee1})
that
\begin{equation}\label{Hi-001}
b(\bv,\varepsilon_h)=a(\be_h,\bv)- \phi_{\bu,p}(\bv),
\end{equation}
for all $\bv\in V_h$. In particular, by letting $\bv=
F(\varepsilon_h)$ be the artificial flux of $\varepsilon_h$, we have
from Lemma \ref{Lemma:inf-sup} that
\begin{eqnarray*}
|\varepsilon_h|_{1,h}^2 &=& b(F(\varepsilon_h), \varepsilon_h)\\
&=&a(\be_h, F(\varepsilon_h))- \phi_{\bu,p}(F(\varepsilon_h)).
\end{eqnarray*}
Thus, from the definition of $a(\cdot,\cdot)$ and $F(\varepsilon_h)$
we have
\begin{eqnarray*}
|\varepsilon_h|_{1,h}^2 &\leq& \|\be_h\|_{1,h}\ \|
F(\varepsilon_h)\|_{1,h} + |(\kappa^{-1}\be_0, \kappa\nabla
\varepsilon_h)|+ Ch^{k}(\|\bu\|_{k+1}+\|p\|_{k})
|F(\varepsilon_h)|_h\\
&\leq& \|\be_h\|_{1,h}\ \| F(\varepsilon_h)\|_{1,h} +
|(\kappa^{-\frac12}\be_0, \kappa^{\frac12}\nabla \varepsilon_h)|+
Ch^{k}(\|\bu\|_{k+1}+\|p\|_{k}) \|F(\varepsilon_h)\|_{1,h}\\
&\leq& Ch^{k}(\|\bu\|_{k+1}+\|p\|_{k}) \|F(\varepsilon_h)\|_{1,h} +
\|\kappa^{-\frac12} \be_0\|\ \|\kappa^{\frac12}\nabla
\varepsilon_h\|\\
&\leq& Ch^{k}(\|\bu\|_{k+1}+\|p\|_{k}) h^{-1} |\varepsilon_h|_{1,h}
+ \|\kappa^{-\frac12} \be_0\|\ |\varepsilon_h|_{1,h}.
\end{eqnarray*}
Dividing $h^{-1}|\varepsilon_h|_{1,h}$ from both sides of the above
inequality leads to
$$
h |\varepsilon_h|_{1,h} \leq Ch^{k}(\|\bu\|_{k+1}+\|p\|_{k}) + C
h\|\kappa^{-\frac12} \be_0\|.
$$
This completes the proof of (\ref{err1}).
\end{proof}

\medskip

The rest of this section is devoted to an error estimate for the
velocity approximation in the standard $L^2$ norm by following the
routine duality argument. The analysis is very much along the same
line as for the Stokes equation \cite{wg-stokes}. More precisely,
let us consider the dual problem which seeks $(\psi;\xi)$ satisfying
\begin{eqnarray}
-\Delta\psi+\A\psi+\nabla \xi &=&\be_0\quad \mbox{in}\;\Omega,\label{dual-m}\\
\nabla\cdot\psi&=&0\quad\mbox{in}\;\Omega,\label{dual-c}\\
\psi&=& 0\quad\mbox{on}\;\partial\Omega.\label{dual-bc}
\end{eqnarray}
Assume that the dual problem has the $[H^{2}(\Omega)]^d\times
H^1(\Omega)$-regularity property in the sense that the solution
$(\psi; \xi)\in [H^{2}(\Omega)]^d\times H^1(\Omega)$ and the
following a priori estimate holds true:
\begin{equation}\label{reg}
\|\psi\|_{2}+\|\xi\|_1\le C\|\be_0\|.
\end{equation}
The assumption (\ref{reg}) is known to be valid when the domain
$\Omega$ is convex and the permeability tensor $\kappa$ is not
highly varying.

\medskip
\begin{theorem}
Let $(\bu; p)\in  [H_0^1(\Omega)\cap H^{k+1}(\Omega)]^d\times
L^2_0(\Omega)\cap H^{k}(\Omega)$ with $k\ge 1$ and $(\bu_h;p_h)\in
V_h\times W_h$ be the solutions of (\ref{moment})-(\ref{bc}) and
(\ref{wg1})-(\ref{wg2}) respectively. Assume that (\ref{reg}) holds
true. Then one has the following estimate
\begin{equation}\label{l2-error}
\|\bu-\bu_0\|\le Ch^{k+1}(\|\bu\|_{k+1}+\|p\|_{k}).
\end{equation}
\end{theorem}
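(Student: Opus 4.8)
The plan is to reduce the claim to an estimate for $\|\be_0\|$, where $\be_0=Q_0\bu-\bu_0$, and then bound $\|\be_0\|$ by a standard duality (Aubin--Nitsche) argument driven by the dual problem (\ref{dual-m})--(\ref{dual-bc}). First, by the triangle inequality $\|\bu-\bu_0\|\le \|\bu-Q_0\bu\|+\|\be_0\|$, and since (\ref{Qh}) with $r=k$, $s=0$ gives $\|\bu-Q_0\bu\|\le Ch^{k+1}\|\bu\|_{k+1}$, it suffices to show $\|\be_0\|\le Ch^{k+1}(\|\bu\|_{k+1}+\|p\|_{k})$.

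Second, I would produce a dual error representation for $\|\be_0\|^2=(\be_0,\be_0)$. Repeating the derivation of (\ref{m5}) with the data replaced by the dual problem (i.e. $\bu\to\psi$, $p\to\xi$, $\bbf\to\be_0$) yields $a(Q_h\psi,\bv)-b(\bv,\bbQ_h\xi)=(\be_0,\bv_0)+\phi_{\psi,\xi}(\bv)$ for all $\bv\in V_h$. Choosing $\bv=\be_h$ and using the symmetry of $a(\cdot,\cdot)$ gives $\|\be_0\|^2=a(\be_h,Q_h\psi)-b(\be_h,\bbQ_h\xi)-\phi_{\psi,\xi}(\be_h)$. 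Two of these terms collapse: by the second error equation (\ref{ee2}) with $q=\bbQ_h\xi\in W_h$ (noting $\bbQ_h$ preserves the zero-mean condition) we have $b(\be_h,\bbQ_h\xi)=0$; and inserting $\bv=Q_h\psi$ into the first error equation (\ref{ee1}) gives $a(\be_h,Q_h\psi)=b(Q_h\psi,\varepsilon_h)+\phi_{\bu,p}(Q_h\psi)$, where $b(Q_h\psi,\varepsilon_h)=(\dw Q_h\psi,\varepsilon_h)=(\bbQ_h(\nabla\cdot\psi),\varepsilon_h)=0$ by the commutativity (\ref{key11}) together with the divergence-free constraint (\ref{dual-c}). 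This leaves the clean identity $\|\be_0\|^2=\phi_{\bu,p}(Q_h\psi)-\phi_{\psi,\xi}(\be_h)$.

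Third, I would estimate the two residual terms. For $\phi_{\psi,\xi}(\be_h)$, apply (\ref{phi-up-estimate}) with $r=1$ (since $\psi\in[H^2(\Omega)]^d$ and $\xi\in H^1(\Omega)$) to get $|\phi_{\psi,\xi}(\be_h)|\le Ch(\|\psi\|_{2}+\|\xi\|_1)\,|\be_h|_h$; then use $|\be_h|_h\le C\3bar\be_h\3bar$, the velocity bound $\3bar\be_h\3bar\le Ch^k(\|\bu\|_{k+1}+\|p\|_k)$ established in the proof of Theorem \ref{h1-bd}, and the regularity (\ref{reg}) to conclude this term is $\le Ch^{k+1}(\|\bu\|_{k+1}+\|p\|_k)\|\be_0\|$. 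The main obstacle is $\phi_{\bu,p}(Q_h\psi)$: applying (\ref{phi-up-estimate}) with $r=k$ and $\bv=Q_h\psi$ bounds it by $Ch^k(\|\bu\|_{k+1}+\|p\|_k)\,|Q_h\psi|_h$, so the crux is a super-approximation estimate $|Q_h\psi|_h\le Ch\|\psi\|_2$. I expect this to be the technical heart: one writes $(Q_h\psi)_0-(Q_h\psi)_b=Q_0\psi-Q_b\psi=Q_b(Q_0\psi-\psi)$ (because $Q_b$ reproduces $P_k$ traces), uses the $L^2$-stability of $Q_b$ to get $\|Q_0\psi-Q_b\psi\|_{\pT}\le \|Q_0\psi-\psi\|_{\pT}$, and then combines the trace inequality (\ref{trace}) with the approximation estimate (\ref{Qh}) at $r=1$, $s=0,1$ to obtain $\sum_{T}h_T^{-1}\|Q_0\psi-\psi\|_{\pT}^2\le Ch^2\|\psi\|_2^2$. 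With the regularity (\ref{reg}) this yields $|\phi_{\bu,p}(Q_h\psi)|\le Ch^{k+1}(\|\bu\|_{k+1}+\|p\|_k)\|\be_0\|$.

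Finally, combining the two bounds gives $\|\be_0\|^2\le Ch^{k+1}(\|\bu\|_{k+1}+\|p\|_k)\|\be_0\|$; cancelling one factor of $\|\be_0\|$ and returning to the triangle inequality of the first step completes the proof of (\ref{l2-error}).
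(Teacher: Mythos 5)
Your proposal is correct and follows essentially the same duality argument as the paper: you arrive at the paper's key identity $\|\be_0\|^2=\phi_{\bu,p}(Q_h\psi)-\phi_{\psi,\xi}(\be_h)$ (equation (\ref{d1})) via the same cancellations $b(\be_h,\bbQ_h\xi)=0$ and $b(Q_h\psi,\varepsilon_h)=0$, and you estimate both residual terms exactly as the paper does, including the super-approximation bound $|Q_h\psi|_h\le Ch\|\psi\|_2$ from the trace inequality (\ref{trace}) and (\ref{Qh}). The only differences are cosmetic: your $Q_b$-stability argument for $|Q_h\psi|_h$ replaces the paper's triangle inequality plus best-approximation on edges, and you spell out the final triangle-inequality step from $\|\be_0\|$ to $\|\bu-\bu_0\|$ that the paper leaves implicit.
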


\begin{proof}
Testing (\ref{dual-m}) by $\be_0$ gives
\[
\|Q_0\bu-\bu_0\|^2=(\be_0,\be_0)=-(\Delta\psi,
\be_0)+(\A\psi,\be_0)+(\nabla\xi,\be_0).
\]
Using (\ref{mmm}) with $\bu=\psi$, $\bv_0=\be_0$ and $p=\xi$, the above equation becomes
\begin{eqnarray*}
\|Q_0\bu-\bu_0\|^2&=&(\nabla_wQ_h\psi,
\nabla_w\be_h)+(\psi,\A\be_0)-(\nabla_w\cdot\be_h,\bbQ_h\xi)\\
& & - l_1(\be_h,\psi)+l_2(\be_h,\xi)\\
&=&(\nabla_wQ_h\psi,
\nabla_w\be_h)+(Q_0\psi,\A\be_0)-(\nabla_w\cdot\be_h,\bbQ_h\xi)\\
& & - l_1(\be_h,\psi)+l_2(\be_h,\xi).\\
\end{eqnarray*}
Adding and subtracting $s(Q_h\psi,\be_h)$ in the equation above
yields
\begin{eqnarray*}
\|Q_0\bu-\bu_0\|^2&=&a(Q_h \psi,\be_h)-b(\be_h,\bbQ_h\xi)
-\phi_{\psi,\xi}(\be_h),
\end{eqnarray*}
where the functional $\phi_{\psi,\xi}$ is given as in
(\ref{phi-up}). Now using the fact that $b(\be_h, \bbQ_h\xi)=0$ and
$b(Q_h\psi,\varepsilon_h)=0$ we obtain
\begin{eqnarray*}
\|Q_0\bu-\bu_0\|^2&=&a(\be_h,Q_h\psi)-b(Q_h\psi,\varepsilon_h)-
\phi_{\psi,\xi}(\be_h).
\end{eqnarray*}
Using the first error equation (\ref{ee1}), we can rewrite the above
equation as follows
\begin{eqnarray}
\|Q_0\bu-\bu_0\|^2&=& \phi_{\bu,p} (Q_h\psi) -
\phi_{\psi,\xi}(\be_h).\label{d1}
\end{eqnarray}
The right-hand side of (\ref{d1}) can be bounded by using the
estimate (\ref{phi-up-estimate}). To this end, using
(\ref{phi-up-estimate}) with $r=k, \ \bw=\bu$, and $\rho=p$ we
obtain
\begin{equation}\label{yes.108}
|\phi_{\bu,p} (Q_h\psi)|\leq C h^k (\|\bu\|_{k+1}+\|p\|_k)\
|Q_h\psi|_h.
\end{equation}
Note that from the trace inequality (\ref{trace}) and the definition
of $Q_b$ we have
\begin{eqnarray*}
|Q_h\psi |_h^2&=&\sum_{T\in\T_h} h_T^{-1}\|Q_0\psi-Q_b\psi\|^2_\pT\nonumber\\
&\le& \sum_{T\in\T_h} h_T^{-1}\|Q_0\psi-\psi\|_\pT^2+\sum_{T\in\T_h} h_T^{-1}\|\psi-Q_b\psi\|_\pT^2\nonumber\\
&\le& C\sum_{T\in\T_h} h_T^{-1}\|Q_0\psi-\psi\|_\pT^2\le
Ch^{2}\|\psi\|_{2}^2.
\end{eqnarray*}
Substituting the above into (\ref{yes.108}) gives
\begin{equation}\label{yes.109}
|\phi_{\bu,p} (Q_h\psi)|\leq C h^{k+1} (\|\bu\|_{k+1}+\|p\|_k)\
\|\psi\|_2.
\end{equation}
Next, using (\ref{phi-up-estimate}) with $r=1, \ \bw=\psi$, and
$\rho=\xi$ we obtain
\begin{eqnarray}
|\phi_{\psi,\xi} (\be_h)|&\leq& C h (\|\psi\|_{2}+\|\xi\|_1)\
|\be_h|_h\nonumber\\
&\leq& C h (\|\psi\|_{2}+\|\xi\|_1)\3bar\be_h\3bar.\label{yes.209}
\end{eqnarray}
Substituting (\ref{yes.109}) and (\ref{yes.209}) into (\ref{d1})
yields
\begin{eqnarray*}
\|Q_0\bu-\bu_0\|^2&\leq & C h^{k+1} (\|\bu\|_{k+1}+\|p\|_k)\
\|\psi\|_2 + C h (\|\psi\|_{2}+\|\xi\|_1)\3bar\be_h\3bar.
\end{eqnarray*}
Finally, we apply the regularity estimate (\ref{reg}) to the above
estimate to obtain
\begin{eqnarray*}
\|Q_0\bu-\bu_0\|&\leq & C h^{k+1} (\|\bu\|_{k+1}+\|p\|_k) + C h
\3bar\be_h\3bar,
\end{eqnarray*}
which, together with the error estimate (\ref{err1}), completes the
proof of the lemma.
\end{proof}

\section{Numerical Experiments}\label{Section:numerics}

The goal of this section is to numerically demonstrate the
efficiency of the WG finite element algorithm
(\ref{wg1})-(\ref{wg2}) when the lowest order of element (i.e.,
$k=1$) is employed. For simplicity, we consider the Brinkman model
(\ref{moment})-(\ref{bc}) in two dimensional domains. The error for
the WG solution of (\ref{wg1})-(\ref{wg2}) is measured in three
norms defined as follows:

$$
\displaystyle\3bar \bv\3bar^2:
=\sum_{T\in\mathcal{T}_h}\left(\mu\int_T|\nabla_w\bv|^2dx+\int_T\mu\A\bv_0\cdot\bv_0dx
+\int_{\partial T}h^{-1}_T(\bv_0-\bv_b)^2ds\right).
$$
$$
\displaystyle\|\bv\|^2:=\sum_{T\in\mathcal{T}_h}\int_T|\bv_0|^2dx,
$$
$$
\displaystyle\|q\|^2:=\sum_{T\in\mathcal{T}_h}\int_T|q|^2dx.
$$
Note that $\3bar\bv\3bar$ is a discrete $H^1$ norm, and the other
two are the standard $L^2$ norm in the respective spaces. Here
$\nabla_w\bv\in [P_0(T)]^{2\times 2}$ is computed on each element
$T\in\T_h$ by the following equation
$$
(\nabla_w\bv,\tau)_T=-(\bv_0,\nabla\cdot\tau)_T+\l\bv_b,\tau\cdot{\bf
n}\r_{\partial T}
$$
for all $\tau\in [P_0(T)]^{2\times 2}.$ Since $\tau$ is constant on
the element $T$, the above equation can be simplified as
\begin{eqnarray*}
(\nabla_w \bv,\tau)_T=\l\bv_b,\tau\cdot{\bf n}\r_{\partial T},
\quad\forall\tau\in[P_0(T)]^{2\times 2}.
\end{eqnarray*}

For any given $\bv=\{\bv_0,\bv_b\}\in V_h$, the discrete weak
divergence $\nabla_w\cdot\bv\in P_0(T)$ on each element $T\in\T_h$
is computed by solving the following equation
$$
(\nabla_w\cdot \bv,q)_T=-(\bv_0,\nabla
q)_T+\l\bv_b\cdot\bn,q\r_{\partial T}, \quad\forall q\in P_0(T).
$$
Since $q|_T\in P_0(T),$ the above equation can be simplified as
\begin{eqnarray*}
(\nabla_w\cdot \bv,q)_T=\l\bv_b\cdot\bn,q\r_{\partial T},
\quad\forall q\in P_0(T).
\end{eqnarray*}

The numerical examples of this section have been considered in
\cite{efendiev2011robust,iliev2011variational,willems2009numerical}.
Examples 1, 2 and 3 are presented for studying the reliability of
the WG method for problems with high contrast of permeability such
that $\kappa^{-1}$ varies from 1 to $10^6$. In such geometry, large
highly permeable media connect vugs surrounded by a rather lowly
permeable material. Example 1 has known analytical solution (see
\cite{willems2009numerical}). But Examples 2 and 3 do not have
analytical solutions to the author's knowledge. The profiles of
$\kappa^{-1}$ for examples 2 and 3 can be found in
\cite{efendiev2011robust}.

\subsection{Example 1}
\begin{figure}[h!]%[!htb]
\centering
\begin{tabular}{c}
  \resizebox{3in}{2.5in}{\includegraphics{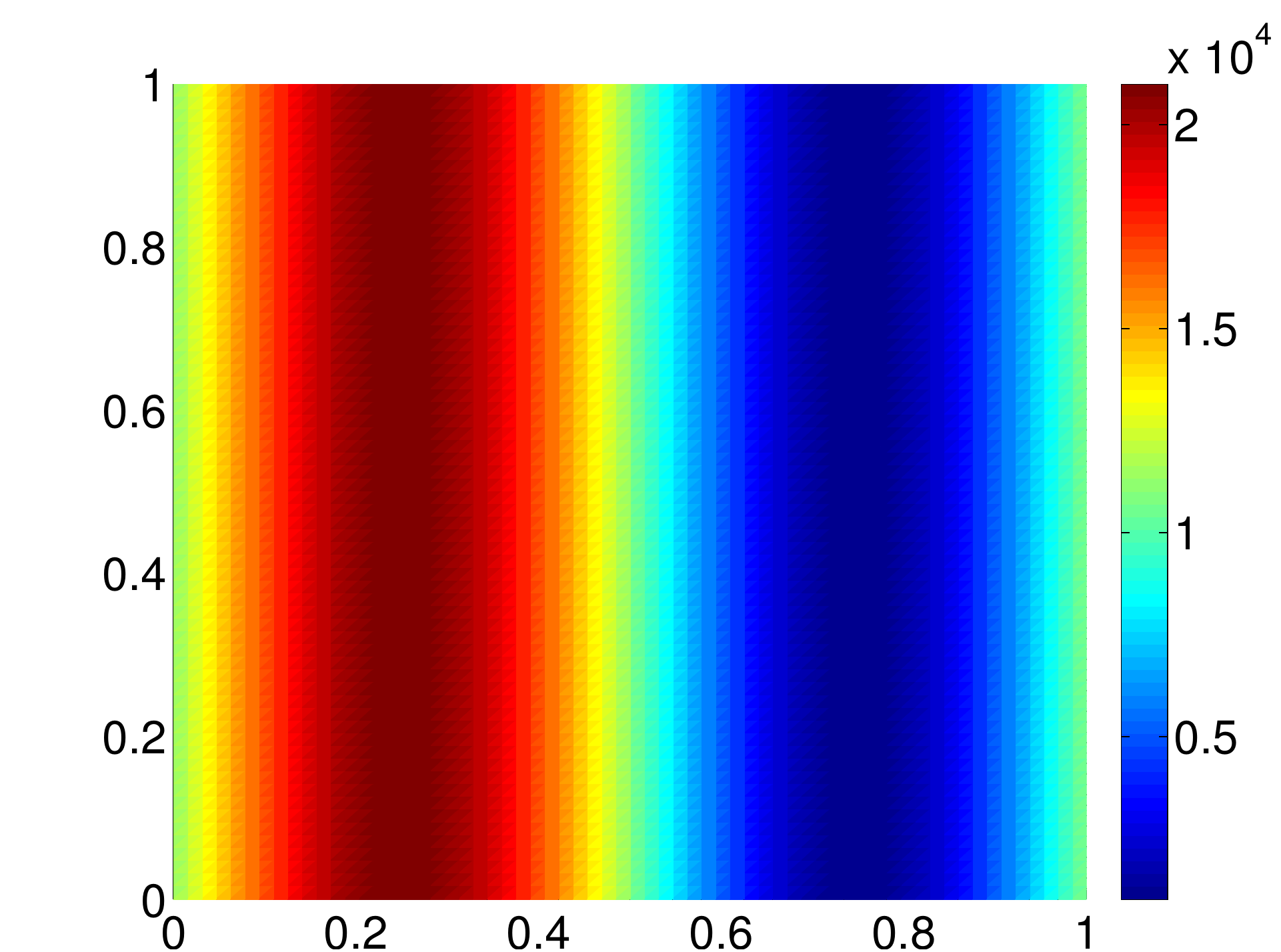}}
  \end{tabular}
\caption{Geometry for $\kappa^{-1}$ in Example 1 with
$a=10^4$.}\label{fig:ex3_kappa}
\end{figure}

This example will test the accuracy and reliability of the method
for a giving analytical solutions and highly varying permeability
$\kappa$. The profile of $\kappa^{-1}$ is shown in Figure
\ref{fig:ex3_kappa}. Let $\Omega=(0,1)\times(0,1).$ The exact
solution is given by
\[
{\bf u}=\begin{pmatrix}
\sin(2\pi x)\cos(2\pi y)\\
-\cos(2\pi x)\sin(2\pi y)
\end{pmatrix}
\mbox{ and }
p=x^2y^2-1/9.
\]
It is easy to check that $\nabla\cdot{\bf u}=0$ and
$\int_{\Omega}p=0.$ We consider the following permeability
$$
\kappa^{-1}=a(\sin(2\pi x)+1.1),
$$
where $a$ is a given constant. The values of $\kappa^{-1}$ are
plotted in Figure \ref{fig:ex3_kappa} for $a=10^4.$

The optimal convergence rates for the corresponding WG solutions are
presented  in %Table \ref{tab:ex3}
Table \ref{ex1_tri_1}-\ref{ex1_L2_3} for $\mu=1,\ 0.01, \mbox{ and }
a=10,\ 10^4$. Our numerical results demonstrate that the WG method
is accurate and robust.

%\begin{table}[h]
%\caption{Example 1. Convergence rate for velocity and pressure on
%triangular mesh. }\label{tab:ex3} \center \scalebox{0.85}{
%\begin{tabular}{||c||c|c|c|c||}
%\hline\hline
%   &{$a=10$, $\mu=1$} &{$a=10$, $\mu=0.01$} &{$a=10^4$, $\mu=1$} &{$a=10^4$, $\mu=0.01$}
%     %& Rate  &Rate  &Rate  &Rate  &Rate
%\\ \hline\hline
%   $\3bar Q_h\bu -\bu_h\3bar$ &1.0303  &1.0609  &1.1122   &1.0851
%\\ \hline
%    $\|Q_0\bu-\bu_0\|$       &1.9929  &1.8952  &2.0103  &2.0030\\
%\hline
%   $\|{\bf u}-{\bf u}_0\|$              &1.9931  &1.9026  &1.9923  &1.9660\\
%\hline
%   $\|\bbQ_h p-p_h\|$              &1.0031  &9.2937e-01 &1.0849 &1.0398
%\\  \hline\hline
%\end{tabular} }
%\end{table}
%

\begin{table}[h]
\caption{Example 1. Error and convergence rate for velocity in norm $\3bar Q_h\bu-\bu_h\3bar$ on triangles.
}\label{ex1_tri_1}
\center
\scalebox{0.85}{
\begin{tabular}{||c||cc|cc|cc|cc||}
\hline\hline
&\multicolumn{ 2}{|c}{$a=10,\mu=1$} &\multicolumn{ 2}{|c}{$a=10,\mu=0.01$} &\multicolumn{ 2}{|c}{$a=10^4,\mu=1$} &\multicolumn{ 2}{|c||}{$a=10^4,\mu=0.01$} \\
$h$ & Error &Rate & Error &Rate & Error &Rate & Error &Rate \\
\hline\hline
   1/16 &3.08e-1 &     &1.55e-1 &     &1.58e-1 &     &1.61e-1 &
\\ \hline
   1/24 &2.00e-1 &1.06 &9.90e-2 &1.11 &1.04e-1 &1.02 &1.01e-1 &1.16
\\ \hline
   1/32 &1.49e-1 &1.03 &7.28e-2 &1.07 &7.90e-2 &0.97 &7.37e-2 &1.10
\\ \hline
   1/40 &1.18e-1 &1.02 &5.76e-2 &1.05 &6.33e-2 &0.93 &5.81e-2 &1.06
\\ \hline
   1/48 &9.84e-2 &1.01 &4.77e-2 &1.03 &5.28e-2 &1.00 &4.80e-2 &1.05
\\ \hline
   1/56 &8.43e-2 &1.01 &4.08e-2 &1.02 &4.50e-2 &1.00 &4.09e-2 &1.03
\\ \hline
   1/64 &7.36e-2 &1.01 &3.56e-2 &1.01 &3.94e-2 &1.00 &3.57e-2 &1.03
\\  \hline\hline
\end{tabular} }
\end{table}

\begin{table}[h]
\caption{Example 1. Error and convergence rate for velocity in norm $\|Q_0\bu-\bu_0\|$ on triangles.
}\label{ex1_L2_1}
\center
\scalebox{0.85}{
\begin{tabular}{||c||cc|cc|cc|cc||}
\hline\hline
&\multicolumn{ 2}{|c}{$a=10,\mu=1$} &\multicolumn{ 2}{|c}{$a=10,\mu=0.01$} &\multicolumn{ 2}{|c}{$a=10^4,\mu=1$} &\multicolumn{ 2}{|c||}{$a=10^4,\mu=0.01$} \\
$h$ & Error &Rate & Error &Rate & Error &Rate & Error &Rate \\
\hline\hline
   1/16 &5.01e-2 &     &5.45e-2 &     &5.84e-2 &     &1.77e-2 &
\\ \hline
   1/24 &2.24e-2 &1.98 &2.64e-2 &1.79 &2.58e-2 &2.02 &7.85e-3 &2.01
\\ \hline
   1/32 &1.26e-2 &1.99 &1.53e-2 &1.88 &1.46e-2 &1.97 &4.41e-3 &2.00
\\ \hline
   1/40 &8.09e-3 &2.00 &9.97e-3 &1.93 &9.36e-3 &2.00 &2.82e-3 &2.00
\\ \hline
   1/48 &5.62e-3 &2.00 &6.99e-3 &1.95 &6.49e-3 &2.00 &1.96e-3 &2.00
\\ \hline
   1/56 &4.13e-3 &2.00 &5.16e-3 &1.96 &4.78e-3 &2.00 &1.44e-3 &2.00
\\ \hline
   1/64 &3.16e-3 &2.00 &3.97e-3 &1.97 &3.65e-1 &2.00 &1.10e-3 &2.00
\\  \hline\hline
\end{tabular} }
\end{table}

\begin{table}[h]
\caption{Example 1. Error and convergence rate for velocity in norm $\|\bu-\bu_0\|$ on triangles.
}\label{ex1_L2_2}
\center
\scalebox{0.85}{
\begin{tabular}{||c||cc|cc|cc|cc||}
\hline\hline
&\multicolumn{ 2}{|c}{$a=10,\mu=1$} &\multicolumn{ 2}{|c}{$a=10,\mu=0.01$} &\multicolumn{ 2}{|c}{$a=10^4,\mu=1$} &\multicolumn{ 2}{|c||}{$a=10^4,\mu=0.01$} \\
$h$ & Error &Rate & Error &Rate & Error &Rate & Error &Rate \\
\hline\hline
   1/16 &3.12e-2 &     &6.70e-2 &     &5.27e-2 &     &6.48e-3 &
\\ \hline
   1/24 &1.39e-2 &1.99 &3.22e-2 &1.80 &2.33e-2 &2.01 &2.98e-3 &1.92
\\ \hline
   1/32 &7.86e-3 &1.99 &1.87e-2 &1.89 &1.30e-2 &2.01 &1.69e-3 &1.96
\\ \hline
   1/40 &5.04e-3 &2.00 &1.21e-2 &1.93 &8.44e-3 &1.97 &1.09e-3 &1.98
\\ \hline
   1/48 &3.50e-3 &2.00 &8.50e-3 &1.95 &5.86e-3 &2.00 &7.56e-4 &1.99
\\ \hline
   1/56 &2.57e-3 &2.00 &6.28e-3 &1.97 &4.30e-3 &2.00 &5.56e-4 &2.00
\\ \hline
   1/64 &1.97e-3 &2.00 &4.82e-3 &1.98 &3.29e-3 &2.00 &4.26e-4 &2.00
\\  \hline\hline
\end{tabular} }
\end{table}

\begin{table}[h]
\caption{Example 1. Error and convergence rate for pressure in norm $\|\mathbb{Q}_hp-p_h\|$ on triangles.
}\label{ex1_L2_3}
\center
\scalebox{0.85}{
\begin{tabular}{||c||cc|cc|cc|cc||}
\hline\hline
&\multicolumn{ 2}{|c}{$a=10,\mu=1$} &\multicolumn{ 2}{|c}{$a=10,\mu=0.01$} &\multicolumn{ 2}{|c}{$a=10^4,\mu=1$} &\multicolumn{ 2}{|c||}{$a=10^4,\mu=0.01$} \\
$h$ & Error &Rate & Error &Rate & Error &Rate & Error &Rate \\
\hline\hline
   1/16 &1.17e-1 &     &4.60e-2 &     &4.97e-1 &     &5.46e-2 &
\\ \hline
   1/24 &7.81e-2 &1.01 &3.24e-2 &0.86 &3.30e-1 &1.00 &3.57e-2 &1.05
\\ \hline
   1/32 &5.85e-2 &1.00 &2.49e-2 &0.92 &2.47e-1 &1.00 &2.64e-2 &1.05
\\ \hline
   1/40 &4.68e-2 &1.00 &2.01e-2 &0.95 &1.98e-1 &1.00 &2.09e-2 &1.04
\\ \hline
   1/48 &3.90e-2 &1.00 &1.69e-2 &0.97 &1.66e-1 &0.97 &1.74e-2 &1.03
\\ \hline
   1/56 &3.34e-2 &1.00 &1.45e-2 &0.98 &1.42e-1 &1.00 &1.48e-2 &1.02
\\ \hline
   1/64 &2.92e-2 &1.00 &1.27e-2 &0.98 &1.24e-1 &1.00 &1.29e-2 &1.02
\\  \hline\hline
\end{tabular} }
\end{table}

The rest of the test problems have the following data setting:
\begin{equation}\label{setting}
\Omega=(0,1)\times(0,1), \quad\mu=0.01,\quad {\bf f}=0,\quad
{\bf g}=\begin{pmatrix}
1\\
0
\end{pmatrix}.
\end{equation}

\subsection{Example 2}
\begin{figure}[H]%[!htb]
\centering
\begin{tabular}{cc}
  \resizebox{2.2in}{2.2in}{\includegraphics{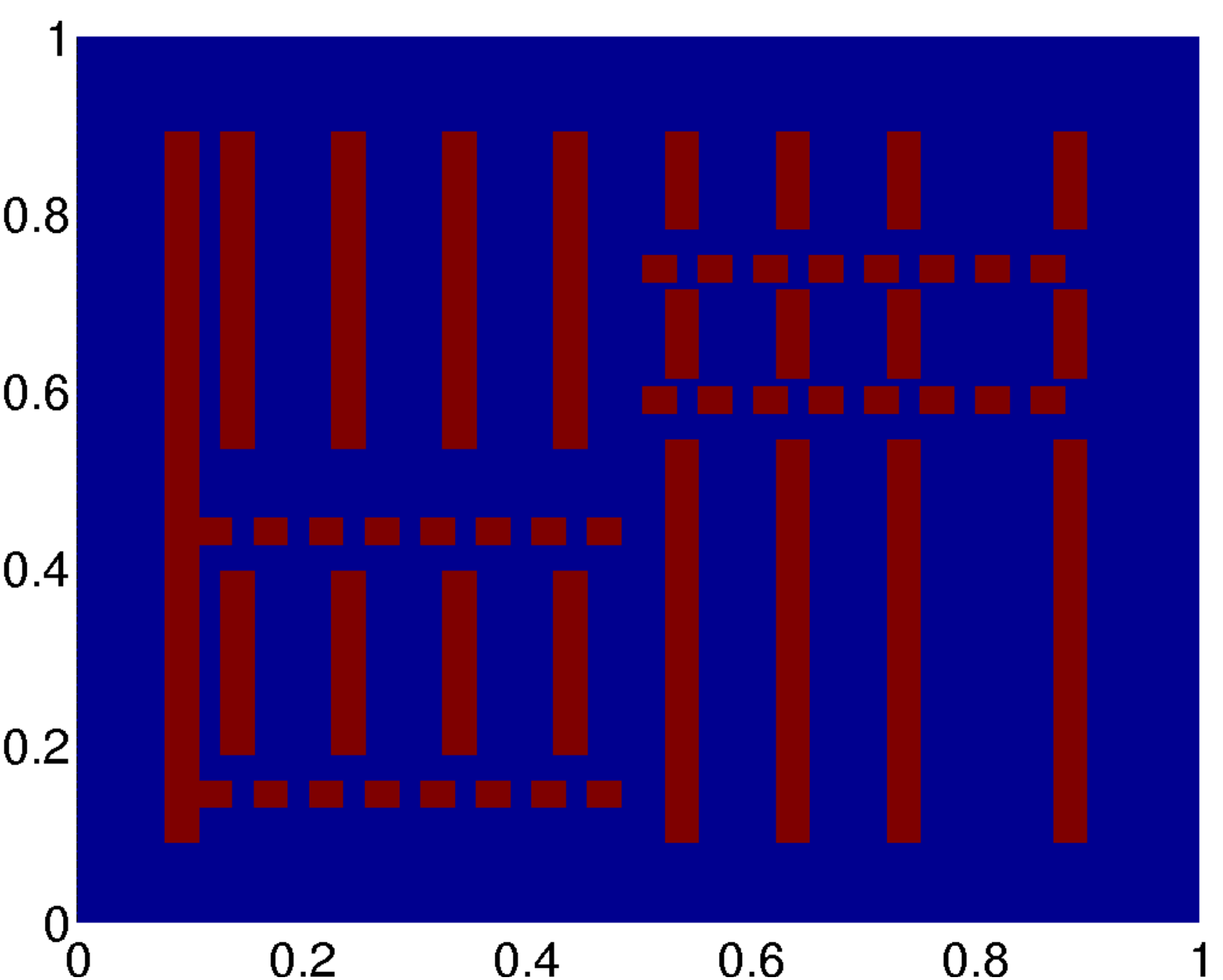}}&
  \resizebox{2.2in}{2.2in}{\includegraphics{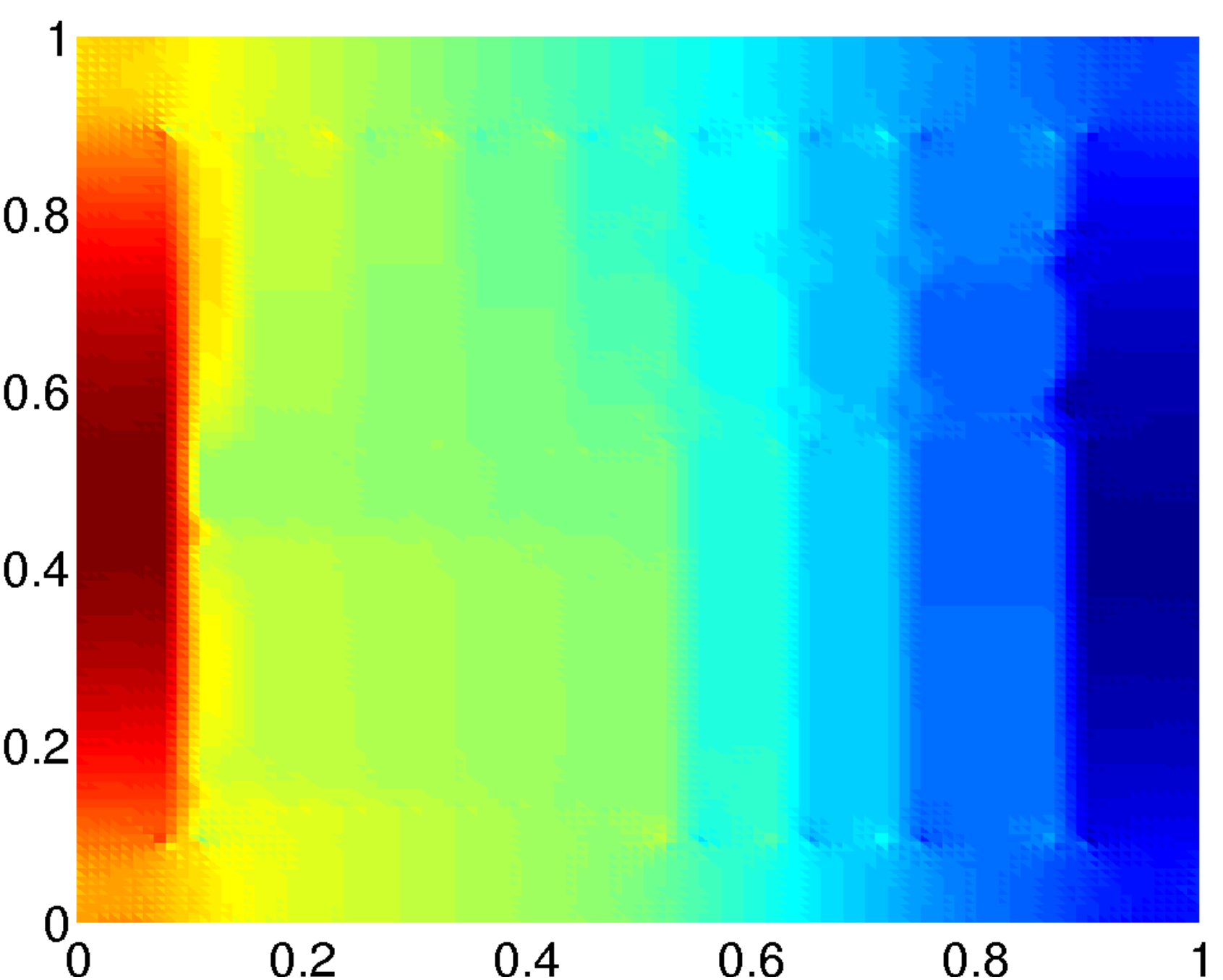}}\\
  (a) & (b)
\end{tabular}
\caption{Example 2: (a) Profile of $\kappa^{-1}$ with low (blue) and
high (red); (b) Pressure profile.}\label{fig:ex4_1}
\end{figure}

\begin{figure}[H]%[!htb]
\centering
\begin{tabular}{cc}
  \resizebox{2.2in}{2.2in}{\includegraphics{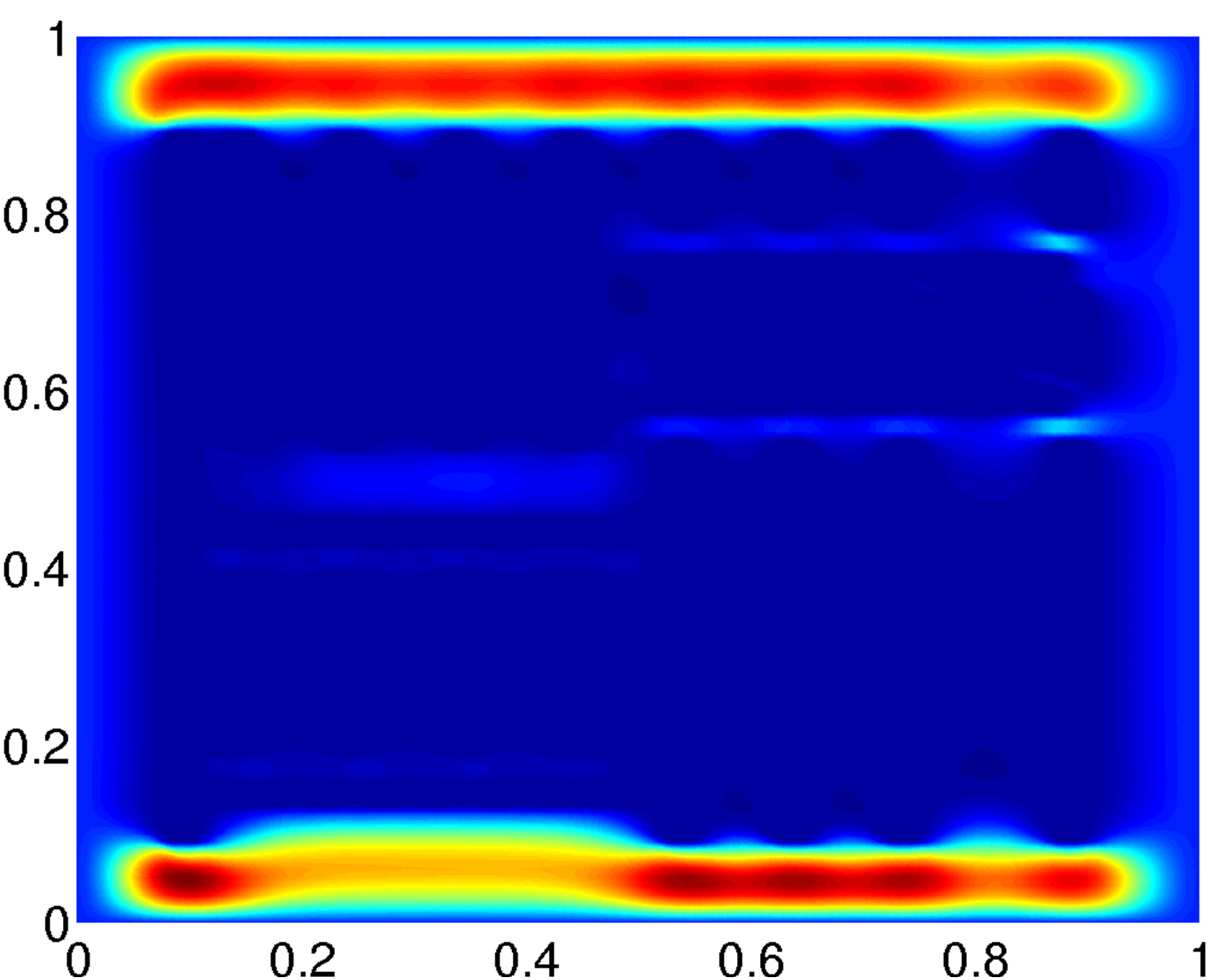}}
  &\resizebox{2.2in}{2.2in}{\includegraphics{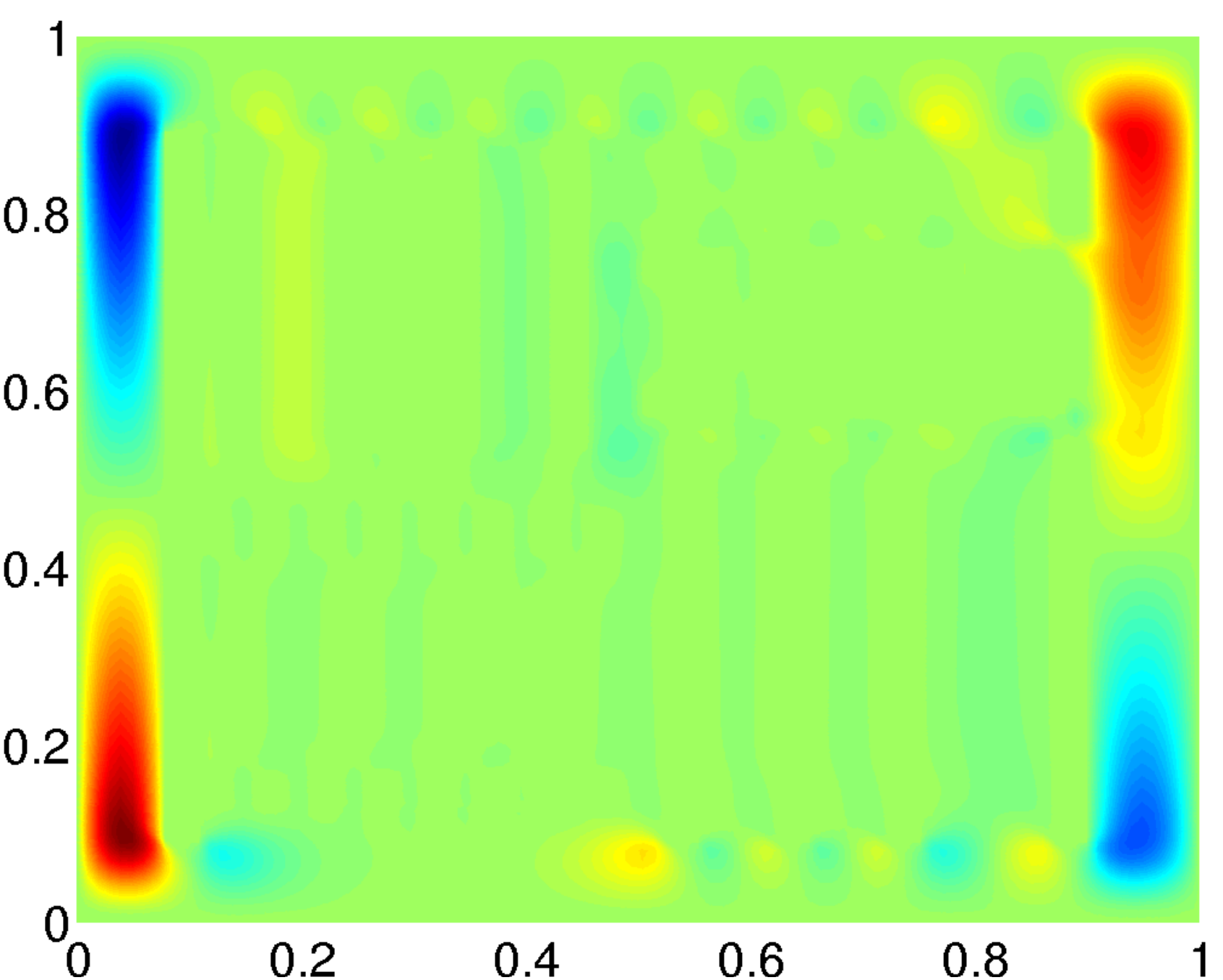}}\\
  (a) & (b)
\end{tabular}
\caption{Example 2: (a) First component of velocity $u_1$; (b)
Second component of velocity  $u_2$.}\label{fig:ex4_2}
\end{figure}

The Brinkman equations (\ref{moment})-(\ref{bc}) are solved over a
region with a high contrast permeability. The profile of the
permeability inverse is plotted in Figure \ref{fig:ex4_1} (a) with
$\kappa^{-1}_{\mbox{min}}=1$ and $\kappa_{\mbox{max}}=10^6$ in the
red and blue regions.

\vskip.1in

A $100\times 100$ mesh is used for plotting Figure \ref{fig:ex4_1}
and Figure \ref{fig:ex4_2}. The pressure profile of the WG method is
presented in Figure \ref{fig:ex4_1} (b). The first and the second
components of the velocity calculated by the WG method are shown in
Figure \ref{fig:ex4_2}(a) and (b) respectively.

\subsection{Example 3}

\begin{figure}[H]%[!htb]
\centering
\begin{tabular}{cc}
  \resizebox{2.2in}{2.2in}{\includegraphics{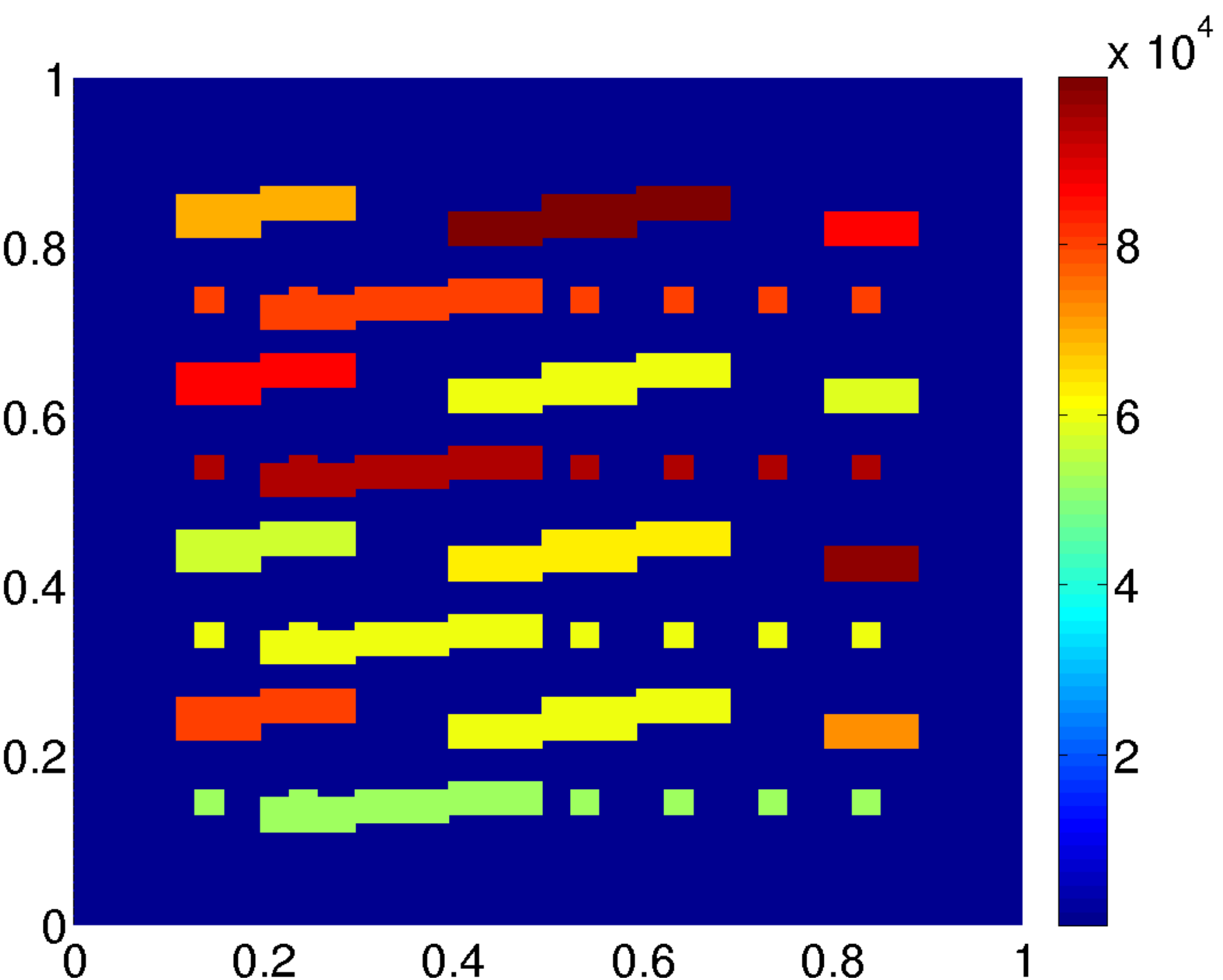}}&
  \resizebox{2.2in}{2.2in}{\includegraphics{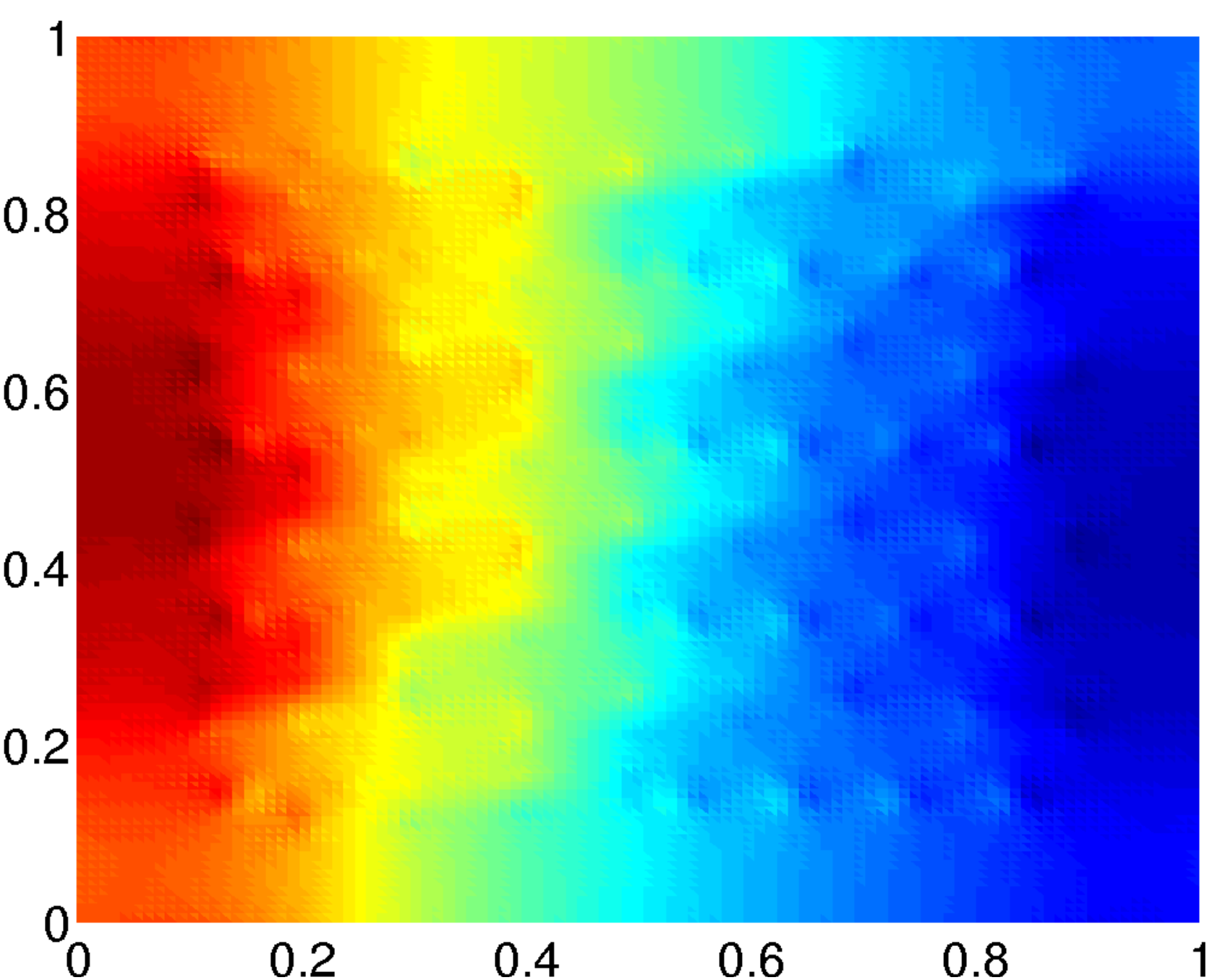}}\\
  (a) &(b)
\end{tabular}
\caption{Example 3: (a) Profile of $\kappa^{-1}$; (b) Pressure
profile.} \label{fig:ex5_1}
\end{figure}

\begin{figure}[H]%[!htb]
\centering
\begin{tabular}{cc}
  \resizebox{2.2in}{2.2in}{\includegraphics{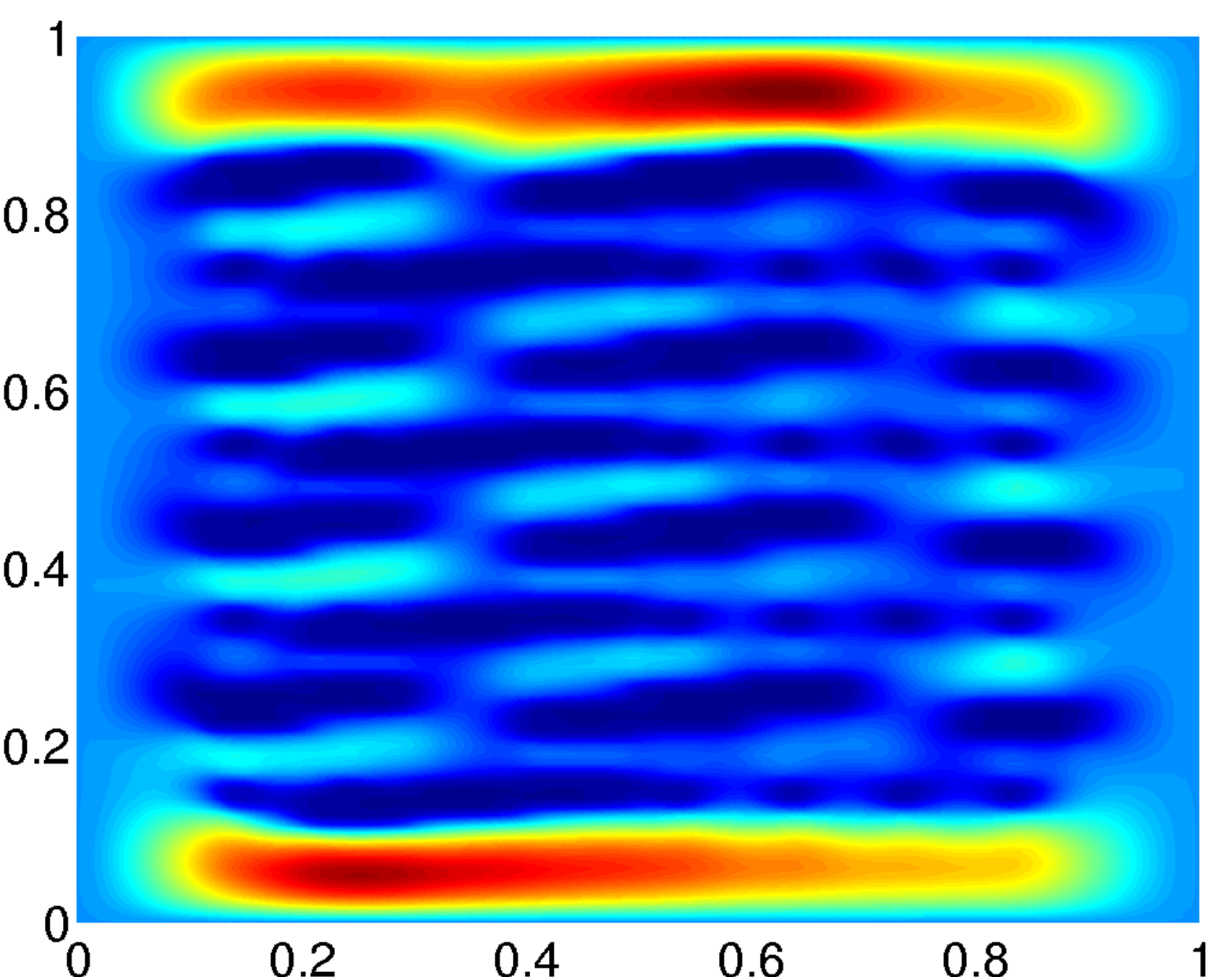}}&
  \resizebox{2.2in}{2.2in}{\includegraphics{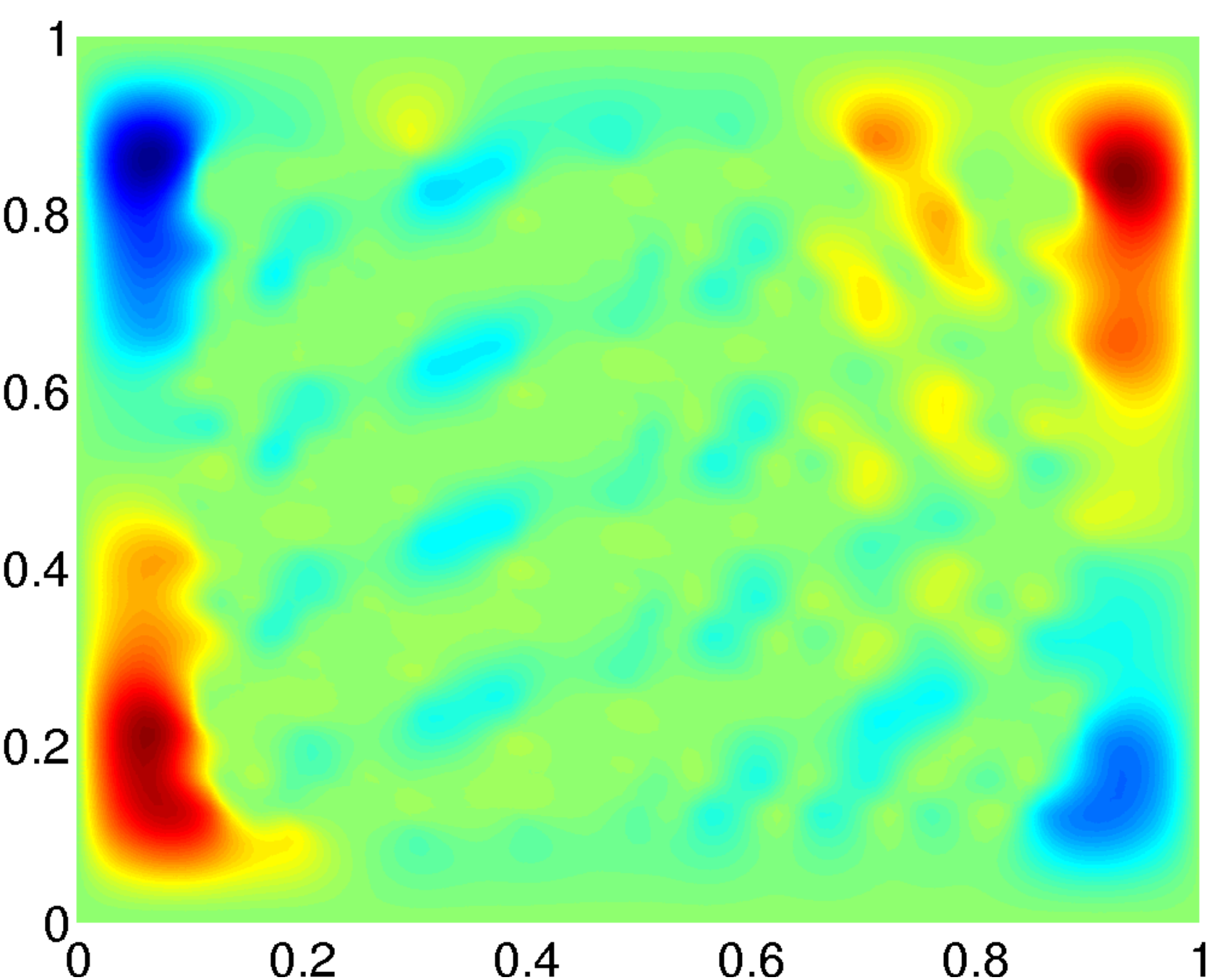}}\\
  (a) & (b)
\end{tabular}
\caption{Example 3: (a) First component of velocity $u_1$; (b)
Second component of velocity  $u_2$.} \label{fig:ex5_2}
\end{figure}

This is another example of flow through a region with high contrast
permeability. The profile of $\kappa^{-1}$ is plotted in Figure
\ref{fig:ex5_1}(a) and the data for the modeling equation is given
in (\ref{setting}).

A $100\times 100$ mesh is used for plotting Figure \ref{fig:ex5_1}
and Figure \ref{fig:ex5_2}. The pressure profile of the WG method is
presented in Figure \ref{fig:ex5_1}(b). The first and the second
components of the velocity calculated by the WG method are shown in
Figure \ref{fig:ex5_2}(a) and \ref{fig:ex5_2}(b) respectively.

\medskip

The rest of the examples simulate flow through porous media with
different geometries without known analytical solutions.  Flow
through vuggy media, fibrous materials and open foam geometries are
tested and their permeability inverse profiles can be found in
different literatures such as
\cite{iliev2011variational,willems2009numerical}.

\subsection{Example 4}
\begin{figure}[H]%[!htb]
\centering
\begin{tabular}{cc}
  \resizebox{2.2in}{2.2in}{\includegraphics{ex3_kappa_P1P1.pdf}}&
  \resizebox{2.2in}{2.2in}{\includegraphics{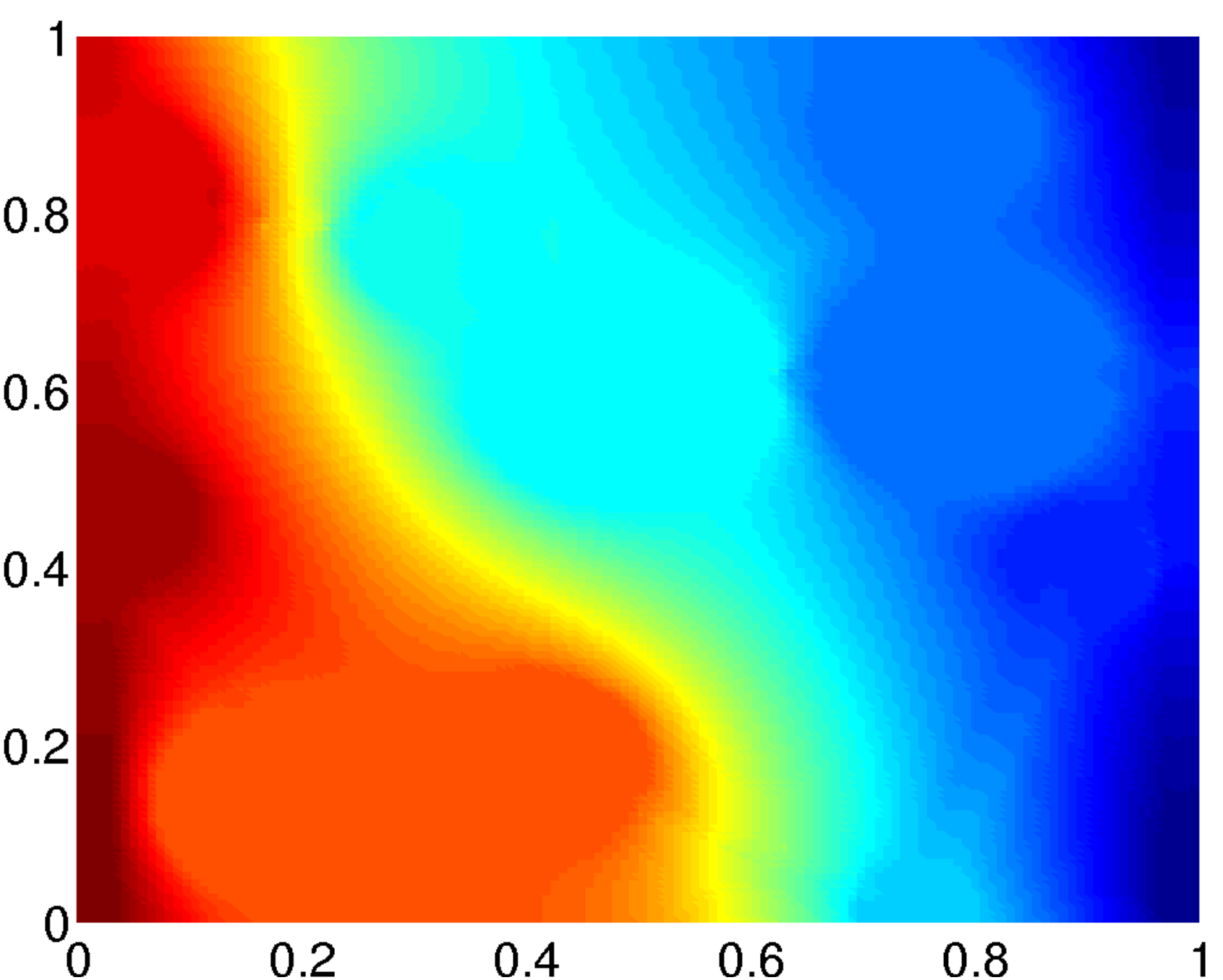}}
  \\
  (a) & (b)
\end{tabular}
\caption{Example 4: (a) Profile of $\kappa^{-1}$ for vuggy medium;
(b) Pressure profile.}\label{fig:ex6_1}
\end{figure}

\begin{figure}[H]%[!htb]
\centering
\begin{tabular}{cc}
  \resizebox{2.2in}{2.2in}{\includegraphics{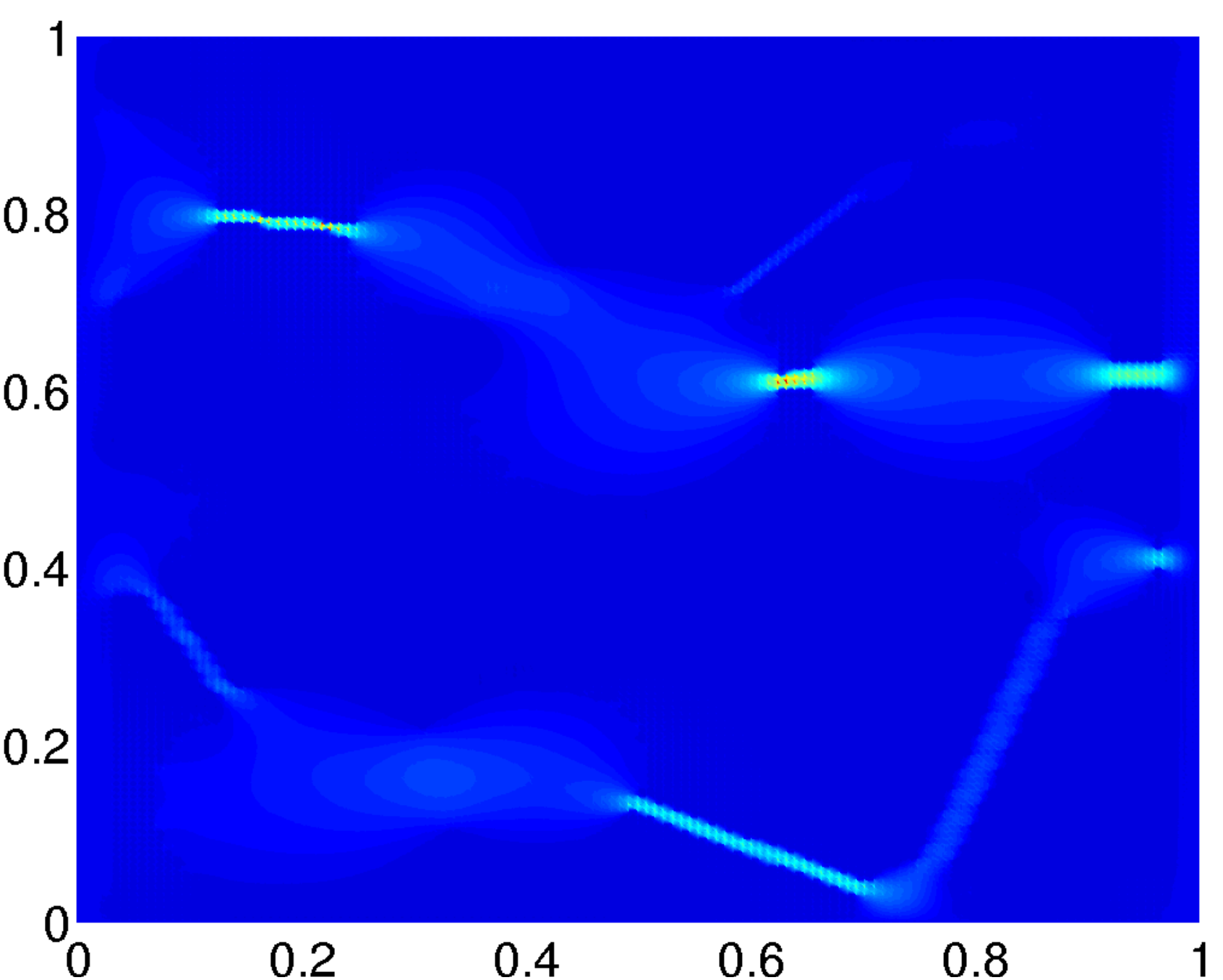}}&
  \resizebox{2.2in}{2.2in}{\includegraphics{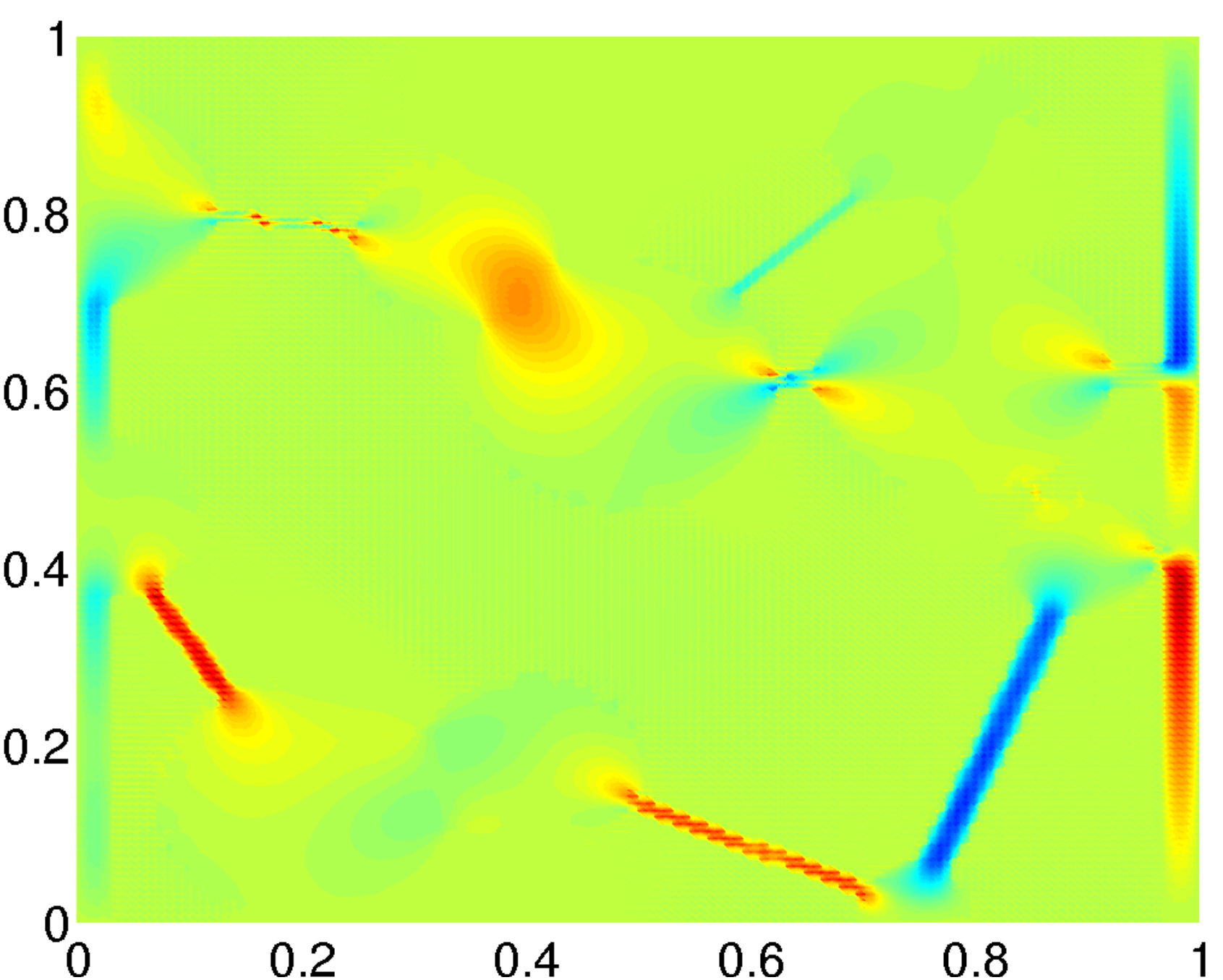}}\\
  (a) & (b)
\end{tabular}
\caption{Example 4: (a) First component of velocity $u_1$; (b)
Second component of velocity  $u_2$.}\label{fig:ex6_2}
\end{figure}

In this example, the Brinkman equations (\ref{moment})-(\ref{bc})
are solved over a vuggy medium with the data set in (\ref{setting}).
The profile of $\kappa^{-1}$ is plotted in Figure
\ref{fig:ex6_1}(a).

For this example, a $128\times 128$ mesh is used for plotting Figure
\ref{fig:ex6_1} and Figure \ref{fig:ex6_2}. The pressure profile of
the WG method is presented in Figure \ref{fig:ex6_1}(b). The first
and the second components of the velocity calculated by the WG
method are shown in Figure \ref{fig:ex6_2}(a) and \ref{fig:ex6_2}(b)
respectively.

\subsection{Example 5}

\begin{figure}[H]%[!htb]
\centering
\begin{tabular}{cc}
  \resizebox{2.2in}{2.2in}{\includegraphics{ex_fibrous_kappa_P1P1.pdf}}&
  \resizebox{2.2in}{2.2in}{\includegraphics{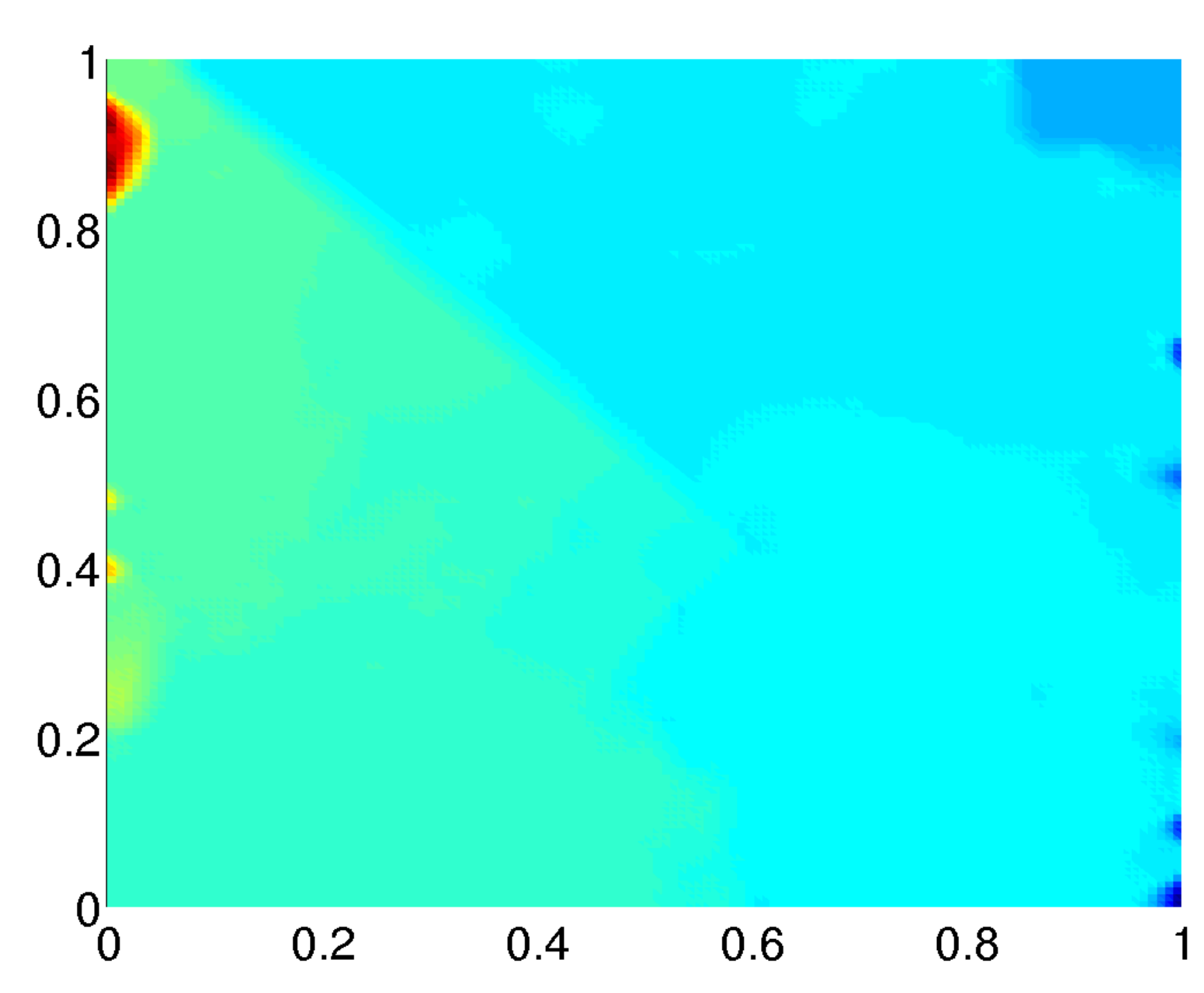}}\\
  (a) & (b)
\end{tabular}
\caption{Example 5: (a) Profile of $\kappa^{-1}$ for fibrous
structure; (b) Pressure profile.}\label{fig:ex7_1}
\end{figure}

\begin{figure}[H]%[!htb]
\centering
\begin{tabular}{cc}
  \resizebox{2.2in}{2.2in}{\includegraphics{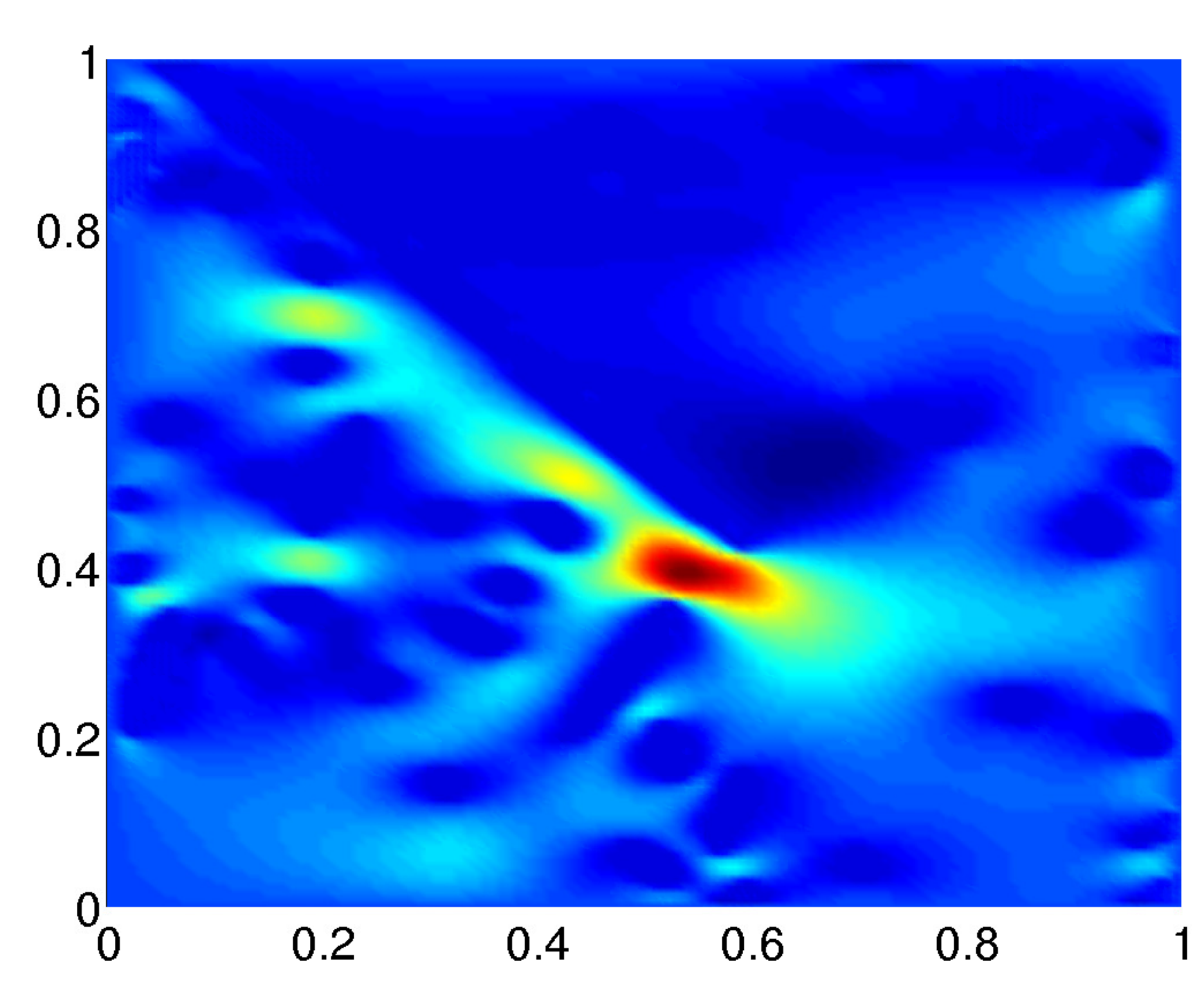}}&
  \resizebox{2.2in}{2.2in}{\includegraphics{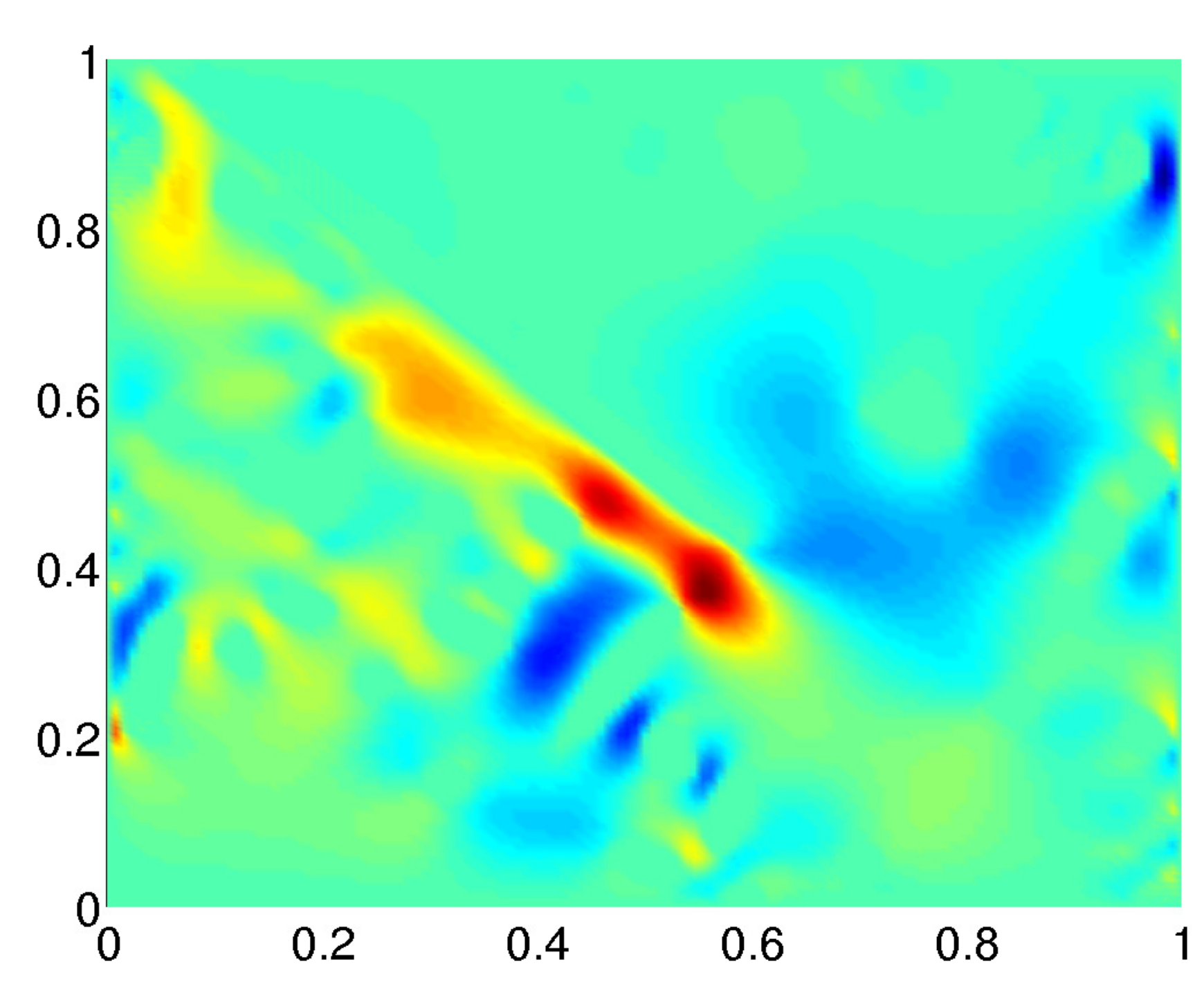}}\\
  (a) & (b)
\end{tabular}
\caption{Example 5: (a) First component of velocity $u_1$; (b)
Second component of velocity  $u_2$.}\label{fig:ex7_2}
\end{figure}

This example is frequently used in filtration and insulation
materials. The inverse of permeability of fibrous structure is shown
in Figure \ref{fig:ex7_1}(a).  A $128\times 128$ mesh is used for
plotting Figure \ref{fig:ex7_1} and Figure \ref{fig:ex7_2}. The
pressure profile of the WG method is presented in Figure
\ref{fig:ex7_1}(b). The first and the second components of the
velocity calculated by the WG method are shown in Figure
\ref{fig:ex7_2}(a) and \ref{fig:ex7_2}(b) respectively.

\subsection{Example 6}

\begin{figure}[H]%[!htb]
\centering
\begin{tabular}{cc}
  \resizebox{2.2in}{2.2in}{\includegraphics{ex5_kappa_P1P1.pdf}}&
  \resizebox{2.2in}{2.2in}{\includegraphics{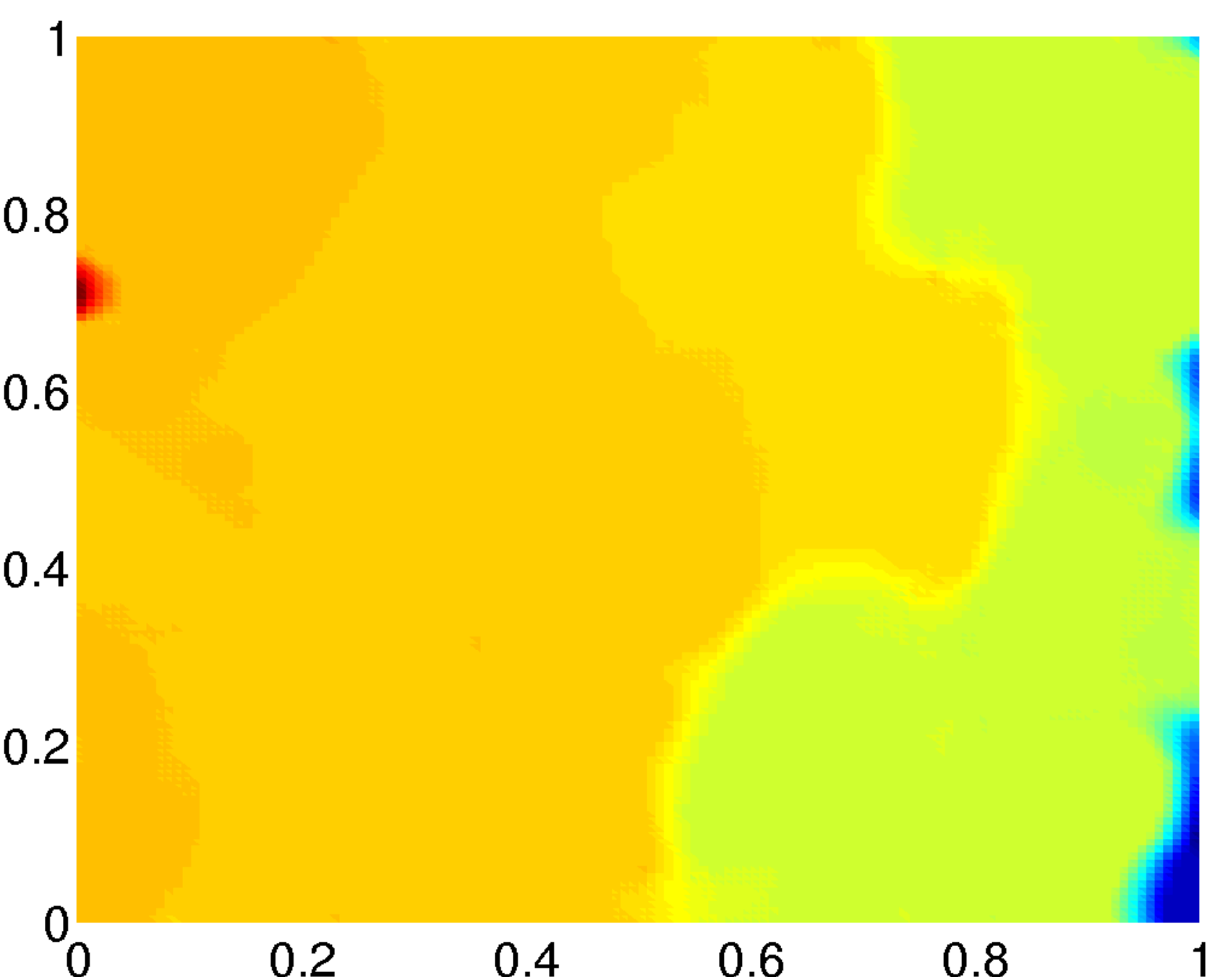}}\\
  (a)&(b)
\end{tabular}
\caption{Example 6: (a) Profile of $\kappa^{-1}$ for open form; (b)
Pressure profile.}\label{fig:ex8_1}
\end{figure}

\begin{figure}[H]%[!htb]
\centering
\begin{tabular}{cc}
  \resizebox{2.2in}{2.2in}{\includegraphics{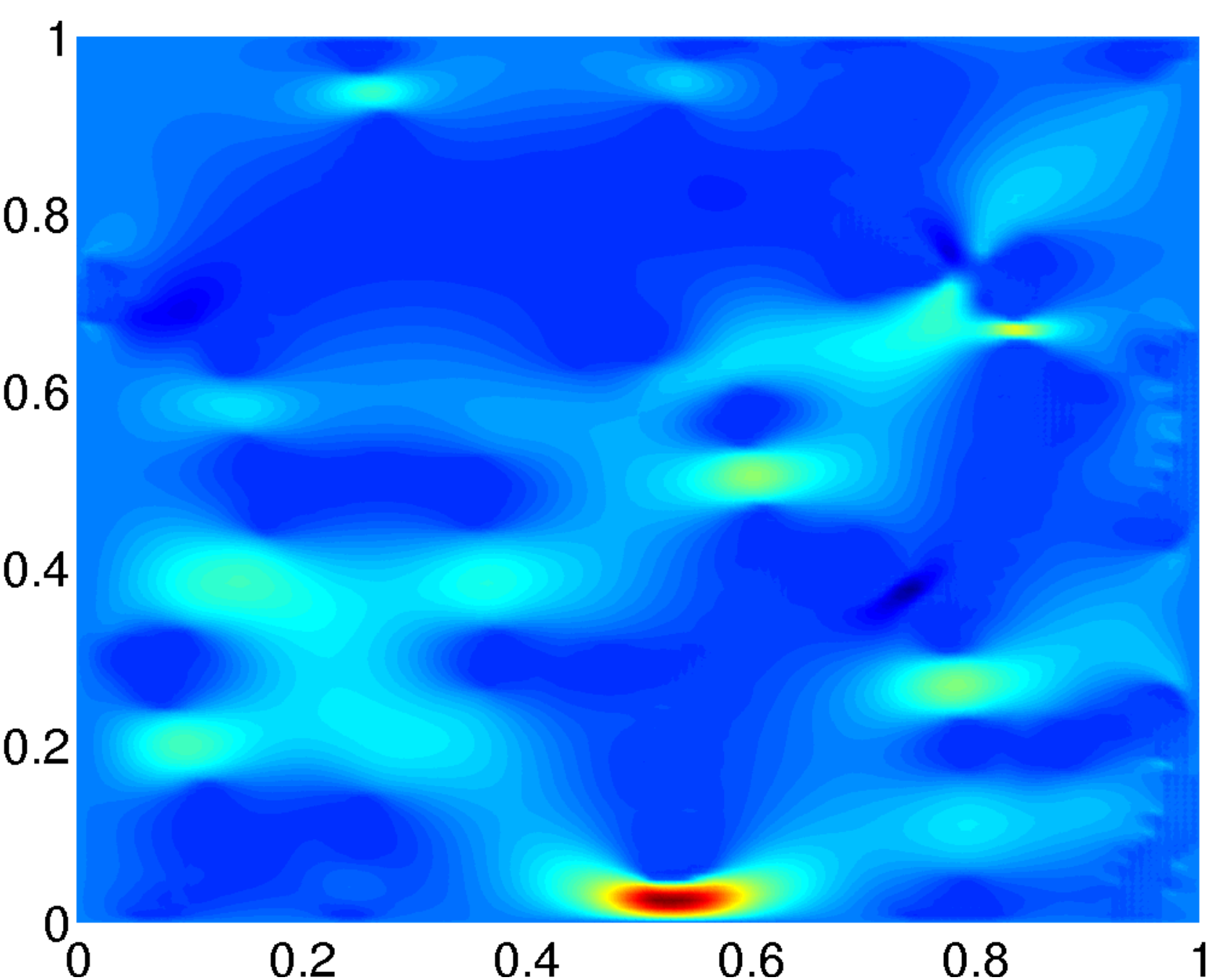}}&
  \resizebox{2.2in}{2.2in}{\includegraphics{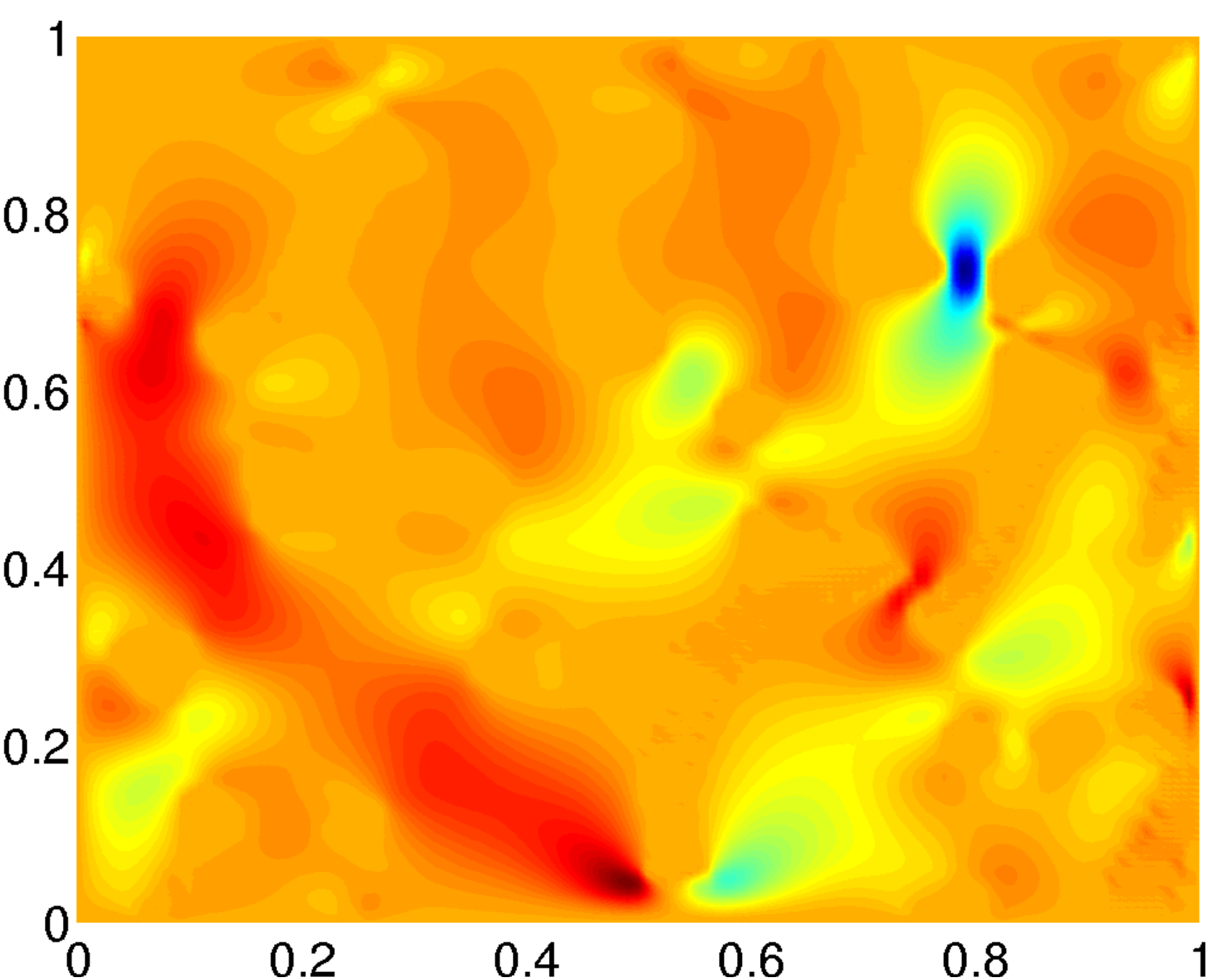}}\\
  (a)&(b)
\end{tabular}
\caption{Example 6: (a) First component of velocity $u_1$; (b)
Second component of velocity  $u_2$.}\label{fig:ex8_2}
\end{figure}

The geometry of this example is an open foam with a profile of
$\kappa^{-1}$ shown in Figure \ref{fig:ex8_1}(a). The rest of the
modeling data is given in (\ref{setting}). Figure \ref{fig:ex8_1}
and Figure \ref{fig:ex8_2} are plotted over a $128\times 128$ grid.
The profiles of the approximate pressure and velocity are presented
in Figure \ref{fig:ex8_1}(b) and  Figure \ref{fig:ex8_2}
respectively.


\begin{thebibliography}{99}

\bibitem{bc}
{\sc S. Badia and R. Codina}
{\em  Unified stabilized finite element formulations for
the Stokes and the Darcy problems}, SIAM J. Numer. Anal., 47 (2009), 1971-2000.

\bibitem{babuska}
{\sc I. Babu\u{s}ka},
{\em The finite element method with Lagrangian multiplier},
Numer. Math., 20 (1973), 179--192.

\bibitem{bs}
{\sc S. Brenner and R. Scott},
{\em Mathematical theory of finite element methods}, Springer, 2002.

\bibitem{brezzi}
{\sc F. Brezzi},
{\em On the existence, uniqueness, and approximation of saddle point
problems arising from Lagrangian multipliers},
RAIRO, Anal. Num\'{e}r.,  2 (1974), 129--151.

\bibitem{bf}
{\sc F. Brezzi and M. Fortin},
{\em Mixed and Hybrid Finite Elements},
Springer-Verlag, New York, 1991.

\bibitem{cr}
{\sc M. Crouzeix and P.  Raviart},  {\em Conforming and nonconforming
finite element methods for solving the stationary Stokes
equations}, RAIRO Anal. Numer., 7 (1973), 33--76.

\bibitem{gr}
{\sc V. Girault and P. Raviart},
{\em Finite Element Methods for
the Navier-Stokes Equations: Theory and Algorithms}, Springer-Verlag, Berlin, 1986.

\bibitem{gun}
{\sc M.  Gunzburger},
{\em Finite Element Methods for Viscous Incompressible Flows,
A Guide to Theory, Practice and Algorithms}, Academic,
San Diego, 1989.

\bibitem{efendiev2011robust}
{\sc Y. Efendiev, J. Galvis, R. Lazarov, and J. Willems},
{\em  Robust domain decomposition preconditioners for abstract symmetric
  positive definite bilinear forms}, arXiv:1105.1131.

\bibitem{iliev2011variational}
{\sc O. Iliev, R. Lazarov, and J. Willems},
{\em  Variational multiscale finite element method for flows in highly
  porous media}, Multiscale Modeling \& Simulation, 9(4) (2011), 1350--1372.

\bibitem{ks}
{\sc J. Könnö and R. Stenberg}, {\em H(div)-conforming finite
elements for the Brinkman problem}, Math. Models and Meth. Applied
Sciences, 11 (2011),  2227--2248.

\bibitem{mtw}
{\sc K. Mardal, X. Tai, and R. Winther} {\em A Robust finite element
method for Darcy-Stokes flow}, SIAM J. Numer. Anal., 40 (2002),
1605–-1631.

\bibitem{wy}
{\sc J. Wang and X. Ye}, {\em A weak Galerkin finite element method
for second-order elliptic problems}, J. Comp. and Appl. Math, 241
(2013) 103-115, arXiv:1104.2897v1.

\bibitem{wy-mixed}
{\sc J. Wang and X. Ye}, {\em A Weak Galerkin mixed finite element
method for second-order elliptic problems}, Math. Comp., to appear,
arXiv:1202.3655v1.

\bibitem{wg-stokes}
{\sc J. Wang and X. Ye}, {\em A Weak Galerkin finite element method
for the Stokes equations}, arXiv:1302.2707v1.

\bibitem{willems2009numerical}
{\sc J. Willems}, {\em Numerical upscaling for multiscale flow problems}, Ph.D Thesis, 2009.



\end{thebibliography}
\end{document}